\documentclass[twoside]{article}  


\usepackage[margin=2cm,bottom=2.5cm]{geometry}
\usepackage{amsmath,amsthm,amssymb,latexsym}
\usepackage[v2,tips]{xy}

\newcommand{\ap}{{\operatorname{AP}}}
\newcommand{\p}{{\operatorname{P}}}
\newcommand{\wap}{{\operatorname{WAP}}}

\newcommand{\aone}{\Box}
\newcommand{\atwo}{\Diamond}
\newcommand{\proten}{{\widehat{\otimes}}}

\newcommand{\mc}[1]{\mathcal{#1}}
\newcommand{\ip}[2]{{\langle {#1} , {#2} \rangle}}
\newcommand{\id}{\operatorname{id}}
\newcommand{\inten}{{\check{\otimes}}}

\theoremstyle{plain}
\newtheorem{proposition}{Proposition}[section]
\newtheorem{theorem}[proposition]{Theorem}
\newtheorem{corollary}[proposition]{Corollary}
\newtheorem{lemma}[proposition]{Lemma}
\theoremstyle{definition}

\begin{document}

\large
\title{Characterising weakly almost periodic functionals on the measure algebra}
\author{Matthew Daws}
\maketitle

\begin{abstract}
Let $G$ be a locally compact group, and consider the weakly-almost periodic
functionals on $M(G)$, the measure algebra of $G$, denoted by $\wap(M(G))$.
This is a C$^*$-subalgebra of the commutative C$^*$-algebra $M(G)^*$, and so has
character space, say $K_\wap$.  In this paper, we investigate properties of $K_\wap$.
We present a short proof that $K_\wap$ can naturally be turned into a
semigroup whose product is separately continuous: at the Banach algebra level, this product
is simply the natural one induced by the Arens products.  This is in complete agreement
with the classical situation when $G$ is discrete.  A study of how $K_\wap$ is related
to $G$ is made, and it is shown that $K_\wap$ is related to the weakly-almost periodic
compactification of the discretisation of $G$.  Similar results are shown for
the space of almost periodic functionals on $M(G)$.

Subject classification: 43A10, 46L89, 46G10 (Primary); 43A20, 43A60, 81R50 (Secondary).

Keywords: Measure algebra, separately continuous, almost periodic, weakly almost periodic.
\end{abstract}

\section{Introduction}

In \cite{daws2}, we developed a theory of corepresentations on reflexive Banach
spaces, and used this to show, in particular, that the space of weakly almost periodic
functionals on the measure algebra $M(G)$ forms a C$^*$-subalgebra of $M(G)^* =
C_0(G)^{**}$.  We write $\wap(M(G))$ for this space, so we see that $\wap(M(G)) = C(K_\wap)$
for some compact Hausdorff space $K_\wap$.  By analogy with the discrete case, when $M(G)=\ell^1(G)$
and when $\wap(M(G))$ can be identified with $\wap(G)$, we would expect $K_\wap$ to become
a semigroup in a natural way, perhaps with continuity properties, and perhaps with
some sort of universal property related to $G$.  For more information on weakly almost
periodic functionals, see \cite{ramirez} or \cite{BJM}; a recent study of the measure
algebra is \cite{DLS}.

In this paper, we show that $K_\wap$ does indeed carry a natural semigroup product which
is separately continuous.  By ``natural'', we mean that the product is directly induced by the
product on $G$: at the level of Banach algebras, this is simply the Arens product.
We show that, formally, the passage from $G$ to $K_\wap$ is a functor
between natural categories.  We might hope, as in the discrete case, to be able
to give a satisfactory description of $K_\wap$ in terms of $G$, but without reference to
specific algebras of functions.  We show some general properties of $K_\wap$, but at present
we fail to find such a description of $K_\wap$ purely in terms of $G$.

As well as weakly almost periodic functionals, one can consider almost periodic functionals.
We do this in the first section, which the reader my skip on a first reading,
if they are interested mainly in the weakly almost periodic case.
Here the functional analytic tools required are simpler, but this easier setting allows
us to develop some methods without undue worry about technicalities.  We also make links
with some classical notions, in particular, Taylor's Structure Semigroup for $M(G)$.

In the next section, we present a short proof that the character space of the
C$^*$-algebra of weakly almost periodic functionals becomes a semigroup whose product
is separately continuous.  The key idea is to use a suitable space of separately
continuous functions.  This was communicated to us be the anonymous referee of a previous
version of this paper.  The motivation of the construction in \cite{daws2} was to build
a theory which might be applicable to the non-commutative case; recently Runde has shown
in \cite{rundepp} that this is rather unlikely to work.  Similarly, the methods of the present
paper, being essentially the study of function spaces, also seems unlikely to generalise
directly to the non-commutative case.

So, we have that $\wap(M(G)) = C(K_\wap)$ where $K_\wap$ becomes a semigroup whose product
is separately continuous, a \emph{semitopological} compact semigroup.
In the final section we study $K_\wap$ as a semigroup, much in the theme of
Section~\ref{ap_sec}.

\medskip
\noindent\textbf{Acknowledgments:} I would like to thank Pekka Salmi for useful conversations,
especially about Taylor's Structure Semigroup, and for suggesting to look at \cite{dgh}.
I am indebted to the anonymous referee of an earlier version of paper who
suggested the idea, and some of the proofs, which forms Section~\ref{wap_case}.
Garth Dales, Tony Lau and Dona Strauss for comments on the exposition of this article.

\section{Commutative Hopf von Neumann algebras}\label{sec:intro}

We now quickly recall some notions and results from \cite{daws2}.
A commutative Hopf von Neumann algebra is a commutative von Neumann algebra
$L^\infty(X)$ equipped with a normal $*$-homomorphism $\Delta:L^\infty(X)
\rightarrow L^\infty(X\times X)$ which is coassociative in the sense that
$(\id\otimes\Delta)\Delta = (\Delta\otimes\id)\Delta$.  In classical situations
$\Delta$ is induced by some map $X\times X\rightarrow X$, but it is important for
our applications that we work with more generality.  The pre-adjoint
of $\Delta$, denoted by $\Delta_*$, induces a map $L^1(X)\proten L^1(X)
= L^1(X\times X) \rightarrow L^1(X)$ which is associative, turning $L^1(X)$
into a Banach algebra.  Here $\proten$ denotes the projective tensor product.
For the basics on tensor products, see \cite[Appendix~A.3]{dales},
\cite[Section~1.10]{palmer}
or \cite[Section~2, Chapter~IV]{tak}.

Our main reason for considering such objects is because, for suitable
$X$ and $\Delta$, we have that $M(G) = L^1(X)$.  Let us quickly recall how
to do this.  Define $\Phi : C_0(G)\rightarrow C(G\times G)$ by
\[ \Phi(f)(s,t) = f(st) \qquad (f\in C_0(G), s,t\in G). \]
Here $C(G\times G)$ is the space of bounded continuous functions on $G\times G$.
We can identify $C(G\times G)$ as a subspace of the dual of $M(G) \proten M(G)$ by
integration,
\[ \ip{F}{\mu\otimes\lambda} = \int_{G\times G} F(s,t) \ d\mu(s) \ d\lambda(t)
\qquad (F\in C(G\times G), s,t\in G). \]
Then, as $C_0(G)^{**} = M(G)^*$ is a commutative von Neumann algebra, there exists
a measure space $X$ with $C_0(G)^{**} = L^\infty(X)$, and so $M(G) = L^1(X)$
as a Banach space.  Thus we regard $\Phi$ as a $*$-homomorphism $C_0(G)
\rightarrow L^\infty(X\times X)$.
There exists a unique normal coassociative $*$-homomorphism
$\Delta:L^\infty(X)\rightarrow L^\infty(X\times X)$ such that
$\Delta \kappa_{C_0(G)}(f) = \Phi(f)$ for $f\in C_0(G)$.  Here, for a Banach space
$E$, $\kappa_E:E\rightarrow E^{**}$ is the canonical map from $E$ to its bidual.
A check shows that the pre-adjoint $\Delta_*$ induces the usual convolution
product on $M(G)$.  See \cite[Section~2.1]{daws2} for further details.

As in \cite{daws2}, it is convenient to work with the abstraction of
commutative Hopf von Neumann algebras, using $M(G)$ as our main example.

Let $\mc A$ be a Banach algebra.  We turn $\mc A^*$ into an $\mc A$-bimodule
in the usual way
\[ \ip{a\cdot\mu}{b} = \ip{\mu}{ba}, \quad
\ip{\mu\cdot a}{b} = \ip{\mu}{ab} \qquad (a,b\in\mc A, \mu\in\mc A^*). \]
We define $\mu\in\mc A^*$ to be \emph{weakly almost periodic} if the map
\[ R_\mu:\mc A\rightarrow \mc A^*, \quad a\mapsto a\cdot\mu \qquad (a\in\mc A), \]
is weakly compact.  We write $\mu\in\wap(\mc A)$.  Similarly, if $R_\mu$ is compact,
then $\mu$ is \emph{almost periodic}, written $\mu\in\ap(\mc A)$.  It is easy
to see that $\ap(\mc A)$ and $\wap(\mc A)$ a closed submodules of $\mc A^*$.
Here we used actions on the left, but we get the same concepts if we instead
look at the map $L_\mu(a) = \mu\cdot a$ for $a\in\mc A$.

We shall use the Arens products in a few places, so we define them here.  We define
contractive bilinear maps $\mc A^{**}\times\mc A^*,\mc A^*\times\mc A^{**}
\rightarrow\mc A^*$ by
\[ \ip{\Phi\cdot\mu}{a} = \ip{\Phi}{\mu\cdot a}, \quad
\ip{\mu\cdot\Phi}{a} = \ip{\Phi}{a\cdot\mu}
\qquad (a\in\mc A, \mu\in\mc A^*, \Phi\in\mc A^{**}). \]
Then we define contractive bilinear maps $\aone,\atwo:\mc A^{**}\times\mc A^{**}
\rightarrow \mc A^{**}$ by
\[ \ip{\Phi\aone\Psi}{\mu} = \ip{\Phi}{\Psi\cdot\mu}, \quad
\ip{\Phi\atwo\Psi}{\mu} = \ip{\Psi}{\mu\cdot\Phi}
\qquad (\mu\in\mc A^*,\Phi,\Psi\in\mc A^{**}). \]
These are actually associative algebra products such that $a\cdot\Phi =
\kappa_{\mc A}(a)\aone\Phi = \kappa_{\mc A}(a)\atwo\Phi$ for $a\in\mc A,
\Phi\in\mc A^{**}$, and similarly for $\Phi\cdot a$.  See \cite[Section~1.4]{palmer}
or \cite[Theorem~2.6.15]{dales} for further details.

\section{Almost periodic case}\label{ap_sec}

In this section we shall investigate further properties of $\ap(L^1(X))$
for a commutative Hopf von Neumann algebras $(L^\infty(X),\Delta)$.
This case is easier than the weakly almost periodic case,
and will allow us to build some general theory without added complication.
By \cite[Theorem~1]{daws2}, we know that $\ap(L^1(X))$ is a
C$^*$-subalgebra of $L^\infty(X)$, and so $\ap(L^1(X)) = C(K_\ap)$ for some
compact Hausdorff space $K_\ap$.

In the following proof, we write $\inten$ for the \emph{injective tensor product},
which for commutative C$^*$-algebras agrees with the \emph{minimal} or
\emph{spacial tensor product}; see, for example, \cite[Section~4, Chapter~IV]{tak}.

\begin{theorem}\label{get_top_semi}
Let $L^\infty(X)$ be a commutative Hopf von Neumann algebra, and let
$\ap(L^1(X))=C(K_\ap)$.  Then $\Delta$ restricts to a map $C(K_\ap)\rightarrow
C(K_\ap \times K_\ap)$, and hence naturally induces a jointly continuous
semigroup product on $K_\ap$.
\end{theorem}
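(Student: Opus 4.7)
The plan is to prove that $\Delta$ sends $C(K_\ap)=\ap(L^1(X))$ into the subalgebra $C(K_\ap)\inten C(K_\ap) = C(K_\ap\times K_\ap)$ of $L^\infty(X\times X)$, and then to invoke Gelfand duality to produce the semigroup structure.

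First, I would set up the operator dictionary. A direct computation from the definitions gives
\[ \ip{\Delta(\mu)}{a\otimes b} = \ip{\mu}{a*b} = \ip{L_\mu(a)}{b} = \ip{R_\mu(b)}{a}, \]
so that, under the identification $(L^1(X)\proten L^1(X))^* = L^\infty(X\times X)$, the element $\Delta(\mu)$ coincides with the bilinear form associated to $L_\mu$, and with that associated to $R_\mu$ after composition with the flip $\sigma$ interchanging the two tensor factors. Crucially, the operator norm on $\mc B(L^1(X),L^\infty(X))$ agrees with the $L^\infty(X\times X)$-norm on the corresponding elements.

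Next, by hypothesis $L_\mu$ and $R_\mu$ are compact; since $\ap(L^1(X))$ is a closed $L^1(X)$-submodule of $L^\infty(X)$, their ranges lie in $C(K_\ap)$, so they factor as compact operators $L^1(X)\to C(K_\ap)$. Since $L^1(X)$ has the metric approximation property, any such compact operator is the operator-norm limit of finite-rank operators $a\mapsto \sum_i \ip{\phi_i}{a}\,g_i$ with $\phi_i\in L^\infty(X)$ and $g_i\in C(K_\ap)$. Through the dictionary, the approximation of $L_\mu$ places $\Delta(\mu)$ in $L^\infty(X)\inten C(K_\ap)$, while the approximation of $R_\mu$, after applying $\sigma$, places $\Delta(\mu)$ in $C(K_\ap)\inten L^\infty(X)$; here both subspaces are understood as subspaces of $L^\infty(X)\inten L^\infty(X) = C(K\times K)$, where $K$ denotes the Gelfand spectrum of $L^\infty(X)$.

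To close, I would show that the intersection of these two subspaces equals $C(K_\ap)\inten C(K_\ap) = C(K_\ap\times K_\ap)$, by pure Gelfand duality: the inclusion $C(K_\ap)\hookrightarrow C(K)$ is dual to a continuous surjection $q\colon K\to K_\ap$, and an element of $C(K\times K)$ lies in $L^\infty(X)\inten C(K_\ap)$ precisely when it is constant on the fibres of $\id\times q$, and in $C(K_\ap)\inten L^\infty(X)$ precisely when constant on the fibres of $q\times\id$; being constant on both forces the function to factor through $K_\ap\times K_\ap$. Then $\Delta$ restricts to a unital $*$-homomorphism $C(K_\ap)\to C(K_\ap\times K_\ap)$, which by Gelfand duality is the same datum as a continuous map $K_\ap\times K_\ap\to K_\ap$, providing a jointly continuous binary operation; coassociativity of $\Delta$ restricted to $C(K_\ap)$ yields associativity. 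The main technical hurdle, as I see it, is the approximation step: it relies on both the metric approximation property of $L^1(X)$ and the observation that $\ap(L^1(X))$ absorbs the module actions, and it is what makes the injective tensor product — rather than merely the von Neumann tensor product $L^\infty(X)\vnten L^\infty(X)$ — relevant.
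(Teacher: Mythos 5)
Your argument is correct, and it reaches the crucial inclusion $\Delta(F)\in C(K_\ap)\inten C(K_\ap)$ by a genuinely different (and more self-contained) route than the paper. The paper simply cites \cite{daws2} for the equivalence $F\in\ap(L^1(X))\Leftrightarrow\Delta(F)\in L^\infty(X)\inten L^\infty(X)$, notes that the submodule property says all slices $(a\otimes\id)\Delta(F)$, $(\id\otimes a)\Delta(F)$ lie in $\ap(L^1(X))$, and then asserts in one line (using commutativity) that this forces $\Delta(F)\in\ap(L^1(X))\inten\ap(L^1(X))$. You instead derive the two one-sided memberships $\Delta(F)\in L^\infty(X)\inten C(K_\ap)$ and $\Delta(F)\in C(K_\ap)\inten L^\infty(X)$ directly from compactness of $L_F$ and $R_F$ plus finite-rank approximation, and then compute the intersection by the fibre argument for the quotient maps $\id\times q$ and $q\times\id$; that fibre argument is in effect a proof of the slice-map fact the paper takes for granted, and your approximation step reconstitutes the content of the cited theorem. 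Two small corrections are needed. First, to approximate a compact operator $L^1(X)\to C(K_\ap)$ in norm by finite-rank operators you should invoke the approximation property of the \emph{target} $C(K_\ap)$ (or of $L^1(X)^*=L^\infty(X)$), not of $L^1(X)$ itself; all of these hold, so nothing breaks, but the attribution is off. Second, for the surjection $q:K\to K_\ap$ to exist you should note that the inclusion $C(K_\ap)\subseteq C(K)$ is unital, which follows since $\Delta(1)=1\otimes 1$ is rank one, so $1\in\ap(L^1(X))$. The trade-off between the two proofs: the paper's is shorter because it reuses the characterisation from \cite{daws2} (which is also needed elsewhere), while yours is self-contained and makes explicit exactly where compactness, the module structure, and the approximation property each enter.
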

\begin{proof}
As in the proof of \cite[Theorem~1]{daws2}, we know that $F\in\ap(L^1(X))$
if and only if $\Delta(F)\in L^\infty(X) \inten L^\infty(X)$.  That $\ap(L^1(X))$
is an $L^1(X)$-submodule of $L^\infty(X)$ is equivalent to
\[ (a\otimes\id)\Delta(F), (\id\otimes a)\Delta(F)\in \ap(L^1(X))
\qquad (a\in L^1(X), F\in\ap(L^1(X)) ). \]
As $L^\infty(X)$ is commutative, this is equivalent to $\Delta(F)
\in \ap(L^1(X)) \inten \ap(L^1(X))$ for $F\in\ap(L^1(X))$.  Thus $\Delta$ restricts
to give a $*$-homomorphism $C(K_\ap) \rightarrow C(K_\ap)\inten C(K_\ap) =
C(K_\ap\times K_\ap)$.  Hence there is a continuous homomorphism
$K_\ap\times K_\ap\rightarrow K_\ap$, which we shall write
as $(s,t)\mapsto st$, such that
\[ \Delta(f)(s,t) = f(st) \qquad (f\in C(K_\ap), s,t\in K_\ap). \]
As $\Delta$ is coassociative, it easily follows that this product on $K_\ap$
is associative, as required.
\end{proof}

It is almost immediate that $K_\ap$ can be characterised, rather abstractly, as follows.
Let $S$ be a compact semigroup, and let $\Delta_S:C(S)\rightarrow C(S\times S)$ be
the canonical coproduct given by $\Delta_S(f)(s,t) = f(st)$ for $f\in C(S)$ and
$s,t\in S$.  Then an operator $\theta:C(S)\rightarrow L^\infty(X)$ \emph{intertwines}
the coproducts if $(\theta\otimes\theta)\Delta_S = \Delta\theta$.  This is equivalent to
$\theta^*:L^1(X)\rightarrow M(S)$ being a Banach algebra homomorphism.
If $\theta$ is also a $*$-homomorphism, then we write
$\theta \in \operatorname{Mor}(S,L^\infty(X))$.
The following is now immediate.

\begin{proposition}\label{ab_ap_char}
Let $S$ be a compact semigroup, and let $\theta\in\operatorname{Mor}(S,L^\infty(X))$.
Then the image of $\theta$ is contained in $\ap(L^1(X))$.  Furthermore,
$\ap(L^1(X))$ is the union of the images of all such $\theta$.
\end{proposition}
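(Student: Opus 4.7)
The plan is to deduce both parts directly from the characterisation used in the proof of Theorem~\ref{get_top_semi}, namely that $F\in\ap(L^1(X))$ if and only if $\Delta(F)\in L^\infty(X)\inten L^\infty(X)$.

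For the first statement, I would fix $\theta\in\operatorname{Mor}(S,L^\infty(X))$ and $f\in C(S)$, and compute
\[ \Delta(\theta(f)) = (\theta\otimes\theta)\Delta_S(f). \]
Since $\Delta_S(f)\in C(S\times S) = C(S)\inten C(S)$ and $\theta\otimes\theta$ extends to a $*$-homomorphism $C(S)\inten C(S)\to L^\infty(X)\inten L^\infty(X)$ (because $\theta$ is a $*$-homomorphism between commutative C$^*$-algebras and the injective tensor product is functorial in this setting), we obtain $\Delta(\theta(f))\in L^\infty(X)\inten L^\infty(X)$. By the characterisation recalled above, this forces $\theta(f)\in\ap(L^1(X))$.

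For the second statement, the natural candidate is to take $S=K_\ap$ with the compact semigroup structure provided by Theorem~\ref{get_top_semi}, and let $\theta$ be the inclusion $\iota:C(K_\ap) = \ap(L^1(X))\hookrightarrow L^\infty(X)$. This is visibly a $*$-homomorphism, and Theorem~\ref{get_top_semi} tells us exactly that $\Delta\iota(f)$ lies in $C(K_\ap)\inten C(K_\ap)$ and equals the canonical coproduct $\Delta_{K_\ap}(f)$; hence $(\iota\otimes\iota)\Delta_{K_\ap} = \Delta\iota$, so $\iota\in\operatorname{Mor}(K_\ap,L^\infty(X))$. Its image is precisely $\ap(L^1(X))$, so combined with the first part we conclude that $\ap(L^1(X))$ is the union of all such images.

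There is no substantive obstacle here; the only point that deserves a careful sentence is observing that Theorem~\ref{get_top_semi} has been stated in a form strong enough to give the intertwining identity (not merely containment of $\Delta(C(K_\ap))$ inside $L^\infty(X)\inten L^\infty(X)$). Once that is noted, both statements are essentially a restatement of what has already been proved.
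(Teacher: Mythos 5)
Your argument is correct and is essentially the paper's own reasoning: the paper gives no separate proof, declaring the proposition ``immediate'' from the characterisation $F\in\ap(L^1(X))\Leftrightarrow\Delta(F)\in L^\infty(X)\inten L^\infty(X)$ established in the proof of Theorem~\ref{get_top_semi}, together with taking $S=K_\ap$ and the inclusion for the reverse containment, exactly as you do.
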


Let $\mathbb G_1=(L^\infty(X_1),\Delta_1)$ and $\mathbb G_2=(L^\infty(X_2),\Delta_2)$
be commutative Hopf von Neumann algebras.  A \emph{morphism} between $\mathbb G_1$
and $\mathbb G_2$ is a normal unital $*$-homomorphism
$T:L^\infty(X_2)\rightarrow L^\infty(X_1)$ which intertwines the coproducts, that is,
$(T\otimes T) \circ \Delta_2 = \Delta_1 \circ T$.  Again, this is equivalent to the
preadjoint $T_*:L^1(X_1)\rightarrow L^1(X_2)$ being a homomorphism of Banach algebras.

\begin{lemma}\label{homo_to_wap}
Let $\mc A$ and $\mc B$ be Banach algebras, and let $T:\mc A\rightarrow\mc B$
be a homomorphism.  Then $T^*$ maps $\ap(\mc B^*)$ to $\ap(\mc A^*)$, and maps
$\wap(\mc B^*)$ to $\wap(\mc A^*)$.
\end{lemma}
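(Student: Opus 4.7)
The plan is to verify that the right-multiplication operator associated with $T^*(\nu)$ factors through the one associated with $\nu$, and then invoke the fact that the (weakly) compact operators form an operator ideal.

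First I would fix $\nu \in \wap(\mc B^*)$ and compute $R_{T^*(\nu)}$ directly. For $a,b \in \mc A$,
\[ \ip{a \cdot T^*(\nu)}{b} = \ip{T^*(\nu)}{ba} = \ip{\nu}{T(b)T(a)} = \ip{T(a)\cdot\nu}{T(b)} = \ip{T^*(T(a)\cdot\nu)}{b}, \]
using that $T$ is an algebra homomorphism at the middle equality. This identifies
\[ R_{T^*(\nu)} = T^* \circ R_\nu \circ T : \mc A \longrightarrow \mc A^*. \]

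Since $T$ and $T^*$ are bounded and $R_\nu$ is weakly compact by assumption, the composition is weakly compact (the weakly compact operators form a two-sided ideal in the operators between Banach spaces). Hence $T^*(\nu) \in \wap(\mc A^*)$. The argument for $\ap$ is identical, using instead that the norm-compact operators form a two-sided ideal. There is no genuine obstacle here: the proof is essentially just the observation that composing with bounded maps preserves (weak) compactness, once one has the factorisation displayed above.
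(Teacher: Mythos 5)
Your proposal is correct and is exactly the argument the paper indicates: it establishes the factorisation $R_{T^*(\nu)} = T^* \circ R_\nu \circ T$ and then uses that the (weakly) compact operators form an operator ideal. You have simply written out in full the computation the paper leaves as a one-line ``folklore'' remark.
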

\begin{proof}
This is folklore, and follows by observing that for $\mu\in\mc B^*$, we have
that $R_{T^*(\mu)} = T^* \circ R_\mu \circ T$.
\end{proof}

Given commutative Hopf von Neumann algebras $\mathbb G_1$ and $\mathbb G_2$,
let $\ap(L^\infty(X_i)) = C(K^{(i)}_\ap)$ for $i=1,2$.
Given a morphism $T$ from $\mathbb G_1$ to $\mathbb G_2$, the lemma shows that
$T(\ap(L^\infty(X_2))) \subseteq \ap(L^\infty(X_1))$, and so as $T$ is a
$*$-homomorphism, we get a continuous map $T_\ap:K^{(1)}_\ap \rightarrow K^{(2)}_\ap$.
As $T$ intertwines the coproducts, it follows that $T_\ap$ is a semigroup homomorphism.

\begin{proposition}\label{ap_functor}
The assignment of $K_\ap$ to $(L^\infty(X),\Delta)$, and of $T_\ap$ to $T$,
is a functor between the category of commutative Hopf von Neumann algebras and the
category of compact topological semigroups with continuous homomorphisms.
\end{proposition}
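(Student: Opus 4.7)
The plan is to observe that almost all of the substantive work has already been completed in the preceding discussion. Theorem~\ref{get_top_semi} supplies the object assignment: each commutative Hopf von Neumann algebra gives rise to a compact topological semigroup $K_\ap$. The paragraph immediately before the statement verifies the morphism assignment: given a morphism $T$ from $\mathbb G_1$ to $\mathbb G_2$, Lemma~\ref{homo_to_wap} guarantees that $T$ restricts to a unital $*$-homomorphism $C(K^{(2)}_\ap) \to C(K^{(1)}_\ap)$, Gelfand duality provides the continuous dual map $T_\ap:K^{(1)}_\ap\to K^{(2)}_\ap$, and the intertwining $(T\otimes T)\circ\Delta_2 = \Delta_1\circ T$ translates, upon restriction, into the assertion that $T_\ap$ is a semigroup homomorphism. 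So the only remaining task is to verify the two functor axioms.

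First I would handle identities: if $T=\id_{L^\infty(X)}$, then $T$ obviously restricts to the identity on $\ap(L^1(X)) = C(K_\ap)$, and hence the induced Gelfand dual $T_\ap$ is the identity on $K_\ap$. Next I would handle composition. Suppose $T:L^\infty(X_2)\to L^\infty(X_1)$ is a morphism $\mathbb G_1\to\mathbb G_2$ and $S:L^\infty(X_3)\to L^\infty(X_2)$ is a morphism $\mathbb G_2\to\mathbb G_3$, so that $T\circ S:L^\infty(X_3)\to L^\infty(X_1)$ is a morphism $\mathbb G_1\to\mathbb G_3$. The restriction of $T\circ S$ to the almost-periodic subalgebras coincides with the composition of the individual restrictions; since Gelfand duality is itself contravariantly functorial on unital $*$-homomorphisms of commutative unital C$^*$-algebras, this immediately gives $(T\circ S)_\ap = S_\ap\circ T_\ap$ as continuous maps $K^{(1)}_\ap\to K^{(3)}_\ap$.

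I do not expect any genuine obstacle here; the only point requiring care is the bookkeeping of arrow directions. The convention for morphisms of commutative Hopf von Neumann algebras has $T$ pulling back from $L^\infty(X_2)$ to $L^\infty(X_1)$, while $T_\ap$ pushes forward from $K^{(1)}_\ap$ to $K^{(2)}_\ap$; this one layer of contravariance, combined with the contravariance of Gelfand duality, compounds to the covariance of the functor as asserted in the statement. Once this is tracked correctly, the proposition is little more than a bundling together of Theorem~\ref{get_top_semi}, Lemma~\ref{homo_to_wap} and the functoriality of the Gelfand transform.
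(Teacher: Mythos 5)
Your proposal is correct and follows exactly the route the paper intends: the paper's proof simply remarks that ``the only thing to check is that maps compose correctly'', and you have carried out that check (identities and composition via the contravariant functoriality of Gelfand duality), with the arrow directions tracked correctly so that $(T\circ S)_\ap = S_\ap\circ T_\ap$.
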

\begin{proof}
The only thing to check is that maps compose correctly; but this is an easy,
if tedious, verification.
\end{proof}

We now specialise to the case when $L^1(X) = M(G)$ for a locally compact group $G$.
Let $G$ and $H$ be locally compact groups, and let $\theta:G\rightarrow H$ be a
continuous group homomorphism.  As $\theta$ need not be proper, we only get an
induced map $\theta_*:C_0(H)\rightarrow C(G)$ given by
\[ \theta_*(f)(s) = f(\theta(s)) \qquad (f\in C_0(H), s\in G). \]
However, we embed $C(G)$ into $M(G)^*$ as in Section~\ref{sec:intro}, which
gives a $*$-homomorphism $T_0:C_0(H)\rightarrow M(G)^*$ which satisfies
\[ \ip{T_0(f)}{\mu} = \int_G f(\theta(s)) \ d\mu(s) \qquad (f\in C_0(H), \mu\in M(G)). \]
We can extend $T_0$ by weak$^*$-continuity to get a normal $*$-homomorphism
$T:M(H)^*\rightarrow M(G)^*$, which is easily checked to be unital.
For $f\in C_0(H)$ and $\mu,\lambda\in M(G)$, we have that
\begin{align*} \ip{T_*(\mu)T_*(\lambda)}{f}
&= \int_{H\times H} f(gh) \ dT_*(\mu)(g) \ dT_*(\lambda)(h)
= \int_H \int_G f(\theta(s)h) \ d\mu(s) \ dT_*(\lambda)(h) \\
&= \int_G \int_G f(\theta(s)\theta(t) \ d\lambda(t) \ d\mu(s)
= \int_{G\times G} f(\theta(st)) \ d\mu(s) \ d\lambda(t)
= \ip{T_*(\mu\lambda)}{f}. \end{align*}
Hence $T_*$ is a Banach algebra homomorphism.

Let us take a diversion briefly, and think about the converse.  That is, let $G$ and
$H$ be locally compact groups, and let $T:M(H)^*\rightarrow M(G)^*$ be a normal
unital $*$-homomorphism such that $T_*:M(G)\rightarrow M(H)$ is a homomorphism.
Then $T_0=T\kappa_{C_0(H)}:C_0(H)\rightarrow M(G)^*$ is a $*$-homomorphism.  Notice that
if $T$ were given by some $\theta$ as in the previous paragraph, then $T_0$ would map
into $C(G)\subseteq M(G)^*$.  For each $g\in G$, $\delta_g\in M(G)$ is a normal
character on $M(G)^*$, and so $C_0(H)\rightarrow\mathbb C; f\mapsto \ip{T_0(f)}{\delta_g}$
is a character (which cannot be zero, as $T$ is unital).  We thus get a map
$\theta:G\rightarrow H$ such that $T_*(\delta_g) = \delta_{\theta(g)}$ for $g\in G$.
It follows that $\theta$ is a homomorphism; and if $T_0$ takes values in $C(G)$,
it follows that $\theta$ is continuous.  We have thus shown that $T$ is associated to
a continuous group homomorphism $G\rightarrow H$ if and only if $T(C_0(H))
\subseteq C(G)$.

Suppose now that $\theta:G\rightarrow H$ is a homomorphism which is Borel measurable.
Then we can find $T_*:M(G)\rightarrow M(H)$ by
\[ \ip{T_*(\mu)}{f} = \int_G f(\theta(s)) \ d\mu(s) \qquad (\mu\in M(G), f\in C_0(H)), \]
as $f\circ\theta$ is Borel, and hence $\mu$-measurable.  It is easy to check that
$T=T_*^*$ is then a unital $*$-homomorphism.  For $\mu,\lambda\in M(G)$ and $f\in C_0(H)$,
\begin{align*} \ip{T_*(\mu) T_*(\lambda)}{f}
&= \int_H f(st) \ dT_*(\mu)(s) \ dT_*(\lambda)(t)
= \int_H \int_G f(\theta(s)t) \ d\mu(s) \ dT_*(\lambda)(t) \\
&= \int_G \int_G f(\theta(s)\theta(t)) \ d\lambda(t) \ d\mu(s)
= \int_{G\times G} f(\theta(st)) \ d(\mu\otimes\lambda)(s,t).
\end{align*}
We identify (by integration) the set of bounded, Borel functions on $G$ with
a $*$-subalgebra of $M(G)^*$.  Thus, we can find a bounded net $(f_\alpha)$
in $C_0(G)$ with $f_\alpha\rightarrow f\circ\theta$ weak$^*$ in $M(G)^*$.
In particular, by applying a point mass, we see that $f_\alpha \rightarrow f\circ\theta$
pointwise.  So by Dominated Convergence,
\[ \ip{T_*(\mu) T_*(\lambda)}{f}
= \lim_\alpha \int_{G\times G} f_\alpha(st) \ d(\mu\otimes\lambda)(s,t).
= \lim_\alpha \ip{\mu\lambda}{f_\alpha}
= \ip{T_*(\mu\lambda)}{f}. \]
Thus $T_*$ is a homomorphism.

Notice that $T(C_0(G)) \subseteq C(H)$ if and only if $\theta$ is actually continuous.
It turns out that, rather often, Borel measurable homomorphisms are already continuous.
By \cite[Theorem~22.18]{HR}, this holds if $H$ is separable or $\sigma$-compact
(see also \cite{neeb} for related results).  However, it seems unlikely that all
morphisms are induced by continuous group homomorphisms.

The following is now easily proved.

\begin{proposition}\label{ap_g_functor}
The assignment of $K_\ap$ to $(M(G)^*,\Delta)$, and of $T_\ap$ to $\theta$,
defines a functor between the category of locally compact groups with continuous
homomorphisms and compact topological semigroups with continuous homomorphisms.
\end{proposition}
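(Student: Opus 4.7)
The plan is to reduce this statement to Proposition \ref{ap_functor} by first exhibiting the assignment $\theta \mapsto T$ of the preceding paragraphs as a functor from locally compact groups (with continuous homomorphisms) to commutative Hopf von Neumann algebras, and then composing with the functor already constructed in Proposition \ref{ap_functor}. The direction works out covariantly throughout: a continuous homomorphism $\theta:G\rightarrow H$ induces a map $T:M(H)^*\rightarrow M(G)^*$ which is a morphism from $(M(G)^*,\Delta_G)$ to $(M(H)^*,\Delta_H)$ in the sense defined earlier.

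The substantive content is already present in the discussion preceding the statement: $T$ is a normal unital $*$-homomorphism, and its preadjoint $T_*:M(G)\rightarrow M(H)$ is a Banach algebra homomorphism, which (as noted just before Lemma~\ref{homo_to_wap}) is equivalent to $T$ intertwining the two coproducts. Hence $\theta\mapsto T$ is well-defined as an assignment of objects and morphisms between the appropriate categories.

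It remains only to verify the two functorial axioms. For the identity $\theta=\id_G$, the map $T_0$ reduces to the canonical inclusion $C_0(G)\hookrightarrow M(G)^*$, so the weak$^*$-continuous extension is $\id_{M(G)^*}$. For composable $\theta_1:G\rightarrow H$ and $\theta_2:H\rightarrow K$ with induced maps $T^{(1)}:M(H)^*\rightarrow M(G)^*$ and $T^{(2)}:M(K)^*\rightarrow M(H)^*$, both $T^{(1)}\circ T^{(2)}$ and the map associated with $\theta_2\circ\theta_1$ are normal $*$-homomorphisms $M(K)^*\rightarrow M(G)^*$ which agree on the weak$^*$-dense subspace $C_0(K)\subseteq M(K)^*$ by the obvious formula $f\mapsto f\circ\theta_2\circ\theta_1$; hence they coincide. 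Composing this functor with the one from Proposition \ref{ap_functor} then yields the desired functor. No substantial obstacle arises; the entire argument is essentially bookkeeping on top of Proposition \ref{ap_functor} and the detailed analysis already performed in the paragraphs preceding the statement.
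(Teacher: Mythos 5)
Your proposal is correct and follows essentially the same route as the paper, which offers no written proof beyond "the following is now easily proved": the preceding paragraphs establish that $\theta\mapsto T$ lands in morphisms of commutative Hopf von Neumann algebras, and one then composes with the functor of Proposition~\ref{ap_functor}. The only point you gloss slightly is that in checking composition, $T^{(2)}$ sends $C_0(K)$ into $C(H)$ rather than $C_0(H)$, so one needs the easy observation (via a bounded approximating net and dominated convergence) that the normal extension $T^{(1)}$ still acts by $g\mapsto g\circ\theta_1$ on all of $C(H)$; this is routine and does not affect the argument.
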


From now on, fix a locally compact group $G$ and a compact topological semigroup $K_\ap$
with $C(K_\ap) = \ap(M(G))$.
The above proposition, in the abstract, tells us that $K_\ap$ depends only upon $G$.
In the remainder of this section, we study some properties of $K_\ap$, with the eventual
aim (not realised yet) of describing $K_\ap$ ``directly'' using $G$.  For example, if $G$
is discrete, then $K_\ap$ is nothing but the usual almost periodic compactification of $G$,
that is, \emph{the} group compactification of $G$.

Let $G_d$ be the group $G$ with the discrete topology.  For each $s\in G$,
the point mass measure $\delta_s\in M(G)$ induces a normal character on $L^\infty(X)$,
and hence by restriction a character on $\ap(M(G)) = C(K_\ap)$.  Hence we get a map
$\theta_0: G_d \rightarrow K_\ap$.

\begin{proposition}
The map $\theta_0:G_d\rightarrow K_\ap$ is a semigroup homomorphism sending the unit
of $G_d$ to the unit of $K_\ap$.
\end{proposition}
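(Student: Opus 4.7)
The plan is to read off both claims directly from the construction of the product on $K_\ap$ in Theorem~\ref{get_top_semi}. For $\varphi,\psi\in K_\ap$, viewed as characters on $C(K_\ap)=\ap(M(G))$, their product $\varphi\psi$ is the character
\[ f \mapsto \Delta(f)(\varphi,\psi) = \ip{\Delta(f)}{\varphi\otimes\psi}, \]
where we use that $\Delta$ sends $C(K_\ap)$ into $C(K_\ap)\inten C(K_\ap) = C(K_\ap\times K_\ap)$. By construction $\theta_0(s)$ is the restriction to $C(K_\ap)$ of the normal character $\delta_s$ on $L^\infty(X)$, so for $f\in C(K_\ap)\subseteq L^\infty(X)$ and $s,t\in G$ I would simply compute
\[ (\theta_0(s)\theta_0(t))(f) = \ip{\Delta(f)}{\delta_s\otimes\delta_t} = \ip{f}{\Delta_*(\delta_s\otimes\delta_t)} = \ip{f}{\delta_s*\delta_t} = \ip{f}{\delta_{st}} = \theta_0(st)(f), \]
the key step being that $\Delta_*$ induces convolution on $M(G) = L^1(X)$, as recalled in Section~\ref{sec:intro}.

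For the unit, I would show that $\theta_0(e)$ is a two-sided identity for $K_\ap$. This amounts to checking that the slice maps $(\theta_0(e)\otimes\id)\Delta$ and $(\id\otimes\theta_0(e))\Delta$ equal the identity on $C(K_\ap)$. The cleanest route is to lift to $L^\infty(X)$: for any $F\in L^\infty(X)$ and $\mu\in M(G)=L^1(X)$,
\[ \ip{(\delta_e\otimes\id)\Delta(F)}{\mu} = \ip{\Delta(F)}{\delta_e\otimes\mu} = \ip{F}{\delta_e*\mu} = \ip{F}{\mu}, \]
since $\delta_e$ is the unit of the measure algebra. Hence $(\delta_e\otimes\id)\Delta=\id$ on $L^\infty(X)$, and symmetrically $(\id\otimes\delta_e)\Delta=\id$. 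Restricting to $C(K_\ap)\subseteq L^\infty(X)$ and using $\delta_e|_{C(K_\ap)}=\theta_0(e)$ by the very definition of $\theta_0$ delivers the identity property.

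The argument is in the main routine; the only point requiring minor care is that the ``lift'' in the unit step be legitimate, i.e.\ that the slice map $(\delta_e\otimes\id)\Delta$ on $L^\infty(X)$ really does restrict to the slice map $(\theta_0(e)\otimes\id)\Delta$ on $C(K_\ap)$. This is immediate from Theorem~\ref{get_top_semi}, which guarantees that $\Delta(C(K_\ap))\subseteq C(K_\ap)\inten C(K_\ap)$, so applying $\delta_e$ in one slot stays within $C(K_\ap)$ and agrees with applying $\theta_0(e)$.
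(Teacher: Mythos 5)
Your proof is correct and follows essentially the same route as the paper: the homomorphism property is exactly the paper's computation $\ip{\Delta(f)}{\delta_s\otimes\delta_t}=\ip{f}{\Delta_*(\delta_s\otimes\delta_t)}=\ip{f}{\delta_{st}}$, read in the other direction. For the unit, the paper simply notes that $\delta_e$ is the unit of $M(G)$ and hence its image is a unit in $\ap(M(G))^*$; your slice-map verification is the same fact spelled out in slightly more detail.
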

\begin{proof}
Let $f\in C(K_\ap) = \ap(M(G))$ so that for $s,t\in G$,
\[ f(\theta_0(st)) = \ip{f}{\delta_{st}} = \ip{f}{\Delta_*(\delta_s\otimes\delta_t)}
= \ip{\Delta(f)}{\delta_s\otimes\delta_t)} = f(\theta_0(s) \theta_0(t)). \]
This is enough to show that $\theta_0(st) = \theta_0(s)\theta_0(t)$, as required.
Finally, let $e\in G_d$ be the unit.  Then $\delta_e\in M(G)$ is the unit of the
Banach algebra $M(G)$, and so the image of $\delta_e$ in $\ap(M(G))^*$ is a unit.
It follows that $\theta_0(e)$ is a unit for $K_\ap$.
\end{proof}

Following \cite[Section~4.1]{BJM},
for example, let $\ap(G_d) \subseteq \ell^\infty(G)$ be the space of almost periodic
functions on $G_d$.  Then $\ap(G_d)$ is a commutative C$^*$-subalgebra of
$\ell^\infty(G)$ with character space $(G_d)^\ap$, the \emph{almost periodic
compactification} of $G_d$.  As $G_d$ is a group, this agrees with the
\emph{strongly almost periodic} compactification, so that $(G_d)^\ap$ is a group.
This follows easily by extending the inverse from $G_d$, and using that the product
in $(G_d)^\ap$ is jointly continuous.
See \cite[Corollary~4.1.12]{BJM} for further details, for example.

As $K_\ap$ is a topological semigroup,  by the universal property of the almost
periodic compactification, there exists a continuous semigroup homomorphism
$\theta:(G_d)^\ap\rightarrow K_\ap$ making the following diagram commute:
\[ \xymatrix{ G_d \ar[r] \ar[rd]_{\theta_0} & (G_d)^\ap \ar[d]^{\theta} \\
& K_\ap } \]

We regard $\ap(G_d) = C((G_d)^\ap)$ as a subalgebra of $l^\infty(G) = C(G_d)$.
Recall (see \cite[Section~3.3]{dales} for further details) that $M(G) = M_c(G)
\oplus_1 \ell^1(G)$, where $\ell^1(G)$ is identified with the atomic measures
in $M(G)$, and $M_c(G)$ is the space of non-atomic measures.  Then $M_c(G)$ is
an ideal in $M(G)$, and so the projection $P:M(G) \rightarrow \ell^1(G)$ is
an algebra homomorphism.

\begin{lemma}\label{discrete_lemma}
$P^*:\ell^\infty(G)\rightarrow M(G)^*$ is an algebra homomorphism which maps
$\ap(G_d)$ into $\ap(M(G))$.
\end{lemma}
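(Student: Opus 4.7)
The plan is to treat the two assertions of the lemma separately. The claim $P^*(\ap(G_d)) \subseteq \ap(M(G))$ follows essentially for free from Lemma~\ref{homo_to_wap} applied to the Banach algebra homomorphism $P: M(G) \to \ell^1(G)$, which is a homomorphism since $M_c(G)$ is a two-sided ideal, so $(\mu \ast \nu)_d = \mu_d \ast \nu_d$, as the text has already noted. The only supplementary point is the classical identification $\ap(G_d) = \ap(\ell^1(G))$: a bounded function $\phi$ on $G$ is almost periodic in the Bohr sense on $G_d$ if and only if the orbit $\{L_s\phi : s \in G\}$ is norm-relatively compact in $\ell^\infty(G)$, if and only if the left module map $\ell^1(G) \to \ell^\infty(G)$, $\mu \mapsto \mu \cdot \phi$, is compact. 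I would then simply quote this equivalence without elaborate reproof.

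The more substantial claim is that $P^*$ is an algebra homomorphism of the commutative $C^*$-algebras $\ell^\infty(G)$ and $M(G)^*$; this is nontrivial because the dual of a Banach algebra homomorphism is in general only a bimodule map, not an algebra homomorphism. The key observation is that $M(G) = \ell^1(G) \oplus_1 M_c(G)$ is an $\ell_1$-direct-sum decomposition of the predual of the commutative W*-algebra $M(G)^* = L^\infty(X)$. Such a decomposition of the predual corresponds to a pair of complementary central projections in the W*-algebra: explicitly, the functional $e \in M(G)^*$ defined by $\langle e, \mu \rangle = \sum_{s \in G} \mu(\{s\})$ is a projection, and one obtains a $C^*$-direct sum $M(G)^* = eM(G)^* \oplus (1-e)M(G)^*$ in which $eM(G)^*$ is canonically $*$-isomorphic to $\ell^\infty(G)$ and $(1-e)M(G)^* = M_c(G)^*$. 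A pairing computation against arbitrary $\mu = \mu_d + \mu_c \in M(G)$ shows that $\langle P^*(\phi), \mu \rangle = \langle \phi, \mu_d \rangle$, identifying $P^*$ with the canonical inclusion of the first summand, which is manifestly a $*$-homomorphism. The main structural fact invoked, that $\ell_1$-direct summands of the predual of a commutative W*-algebra arise from central projections, is standard; this is really the only point where care is needed, and it is the natural obstacle in the argument since everything else is either a quick duality computation or a citation.
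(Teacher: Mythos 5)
Your proposal is correct, and for the main claim it takes a genuinely different route from the paper. The containment $P^*(\ap(G_d))\subseteq\ap(M(G))$ is handled identically in both: apply Lemma~\ref{homo_to_wap} to the homomorphism $P$, using the standard identification $\ap(\ell^1(G))=\ap(G_d)$. For the assertion that $P^*$ is an algebra homomorphism, the paper computes directly with the Arens product: it shows $P(\mu\cdot f)=P(\mu)\cdot f$ for $f\in C_0(G)$, deduces $P^*(\Phi)\cdot\mu=\sum_s\Phi_s a_s\delta_s$ where $P(\mu)=\sum_s a_s\delta_s$, and then checks $\ip{P^*(\Psi)P^*(\Phi)}{\mu}=\ip{P^*(\Psi\Phi)}{\mu}$. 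You instead note that the $\ell_1$-decomposition $M(G)=\ell^1(G)\oplus_1 M_c(G)$ of the predual of the commutative von Neumann algebra $M(G)^*$ is implemented by a central projection $e$ with $\ip{e}{\mu}=\sum_{s\in G}\mu(\{s\})$, so that $P^*$ is the inclusion of the C$^*$-summand $eM(G)^*\cong\ell^\infty(G)$ and hence manifestly a $*$-homomorphism. This is sound; the one step you gloss over is why the identification $eM(G)^*\cong\ell^\infty(G)$ is multiplicative for the product of $M(G)^*$. The quickest way to close this (and to avoid quoting the L-/M-projection machinery wholesale) is to observe that each $\delta_s$ is a normal character on $M(G)^*=C_0(G)^{**}$, so the restriction map $q:M(G)^*\to\ell^\infty(G)$, $\Phi\mapsto(\ip{\Phi}{\delta_s})_{s\in G}$, is a $*$-homomorphism satisfying $q\circ P^*=\id$ and $P^*\circ q=$ multiplication by $e$; injectivity of $q$ on $eM(G)^*$ then forces $P^*$ to be multiplicative. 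With that supplied, your argument and the paper's verify essentially the same identities in different packaging: the paper's computation is shorter and self-contained, while yours is more conceptual, making it transparent that the range of $P^*$ is a C$^*$-ideal of $M(G)^*$ and generalising readily to other L-summands of the predual.
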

\begin{proof}
Let $\mu\in M(G)$ and $P(\mu) = a = \sum_{s\in G} a_s \delta_s \in \ell^1(G)$.
The product on $M(G)^*$ is simply the Arens product on $C_0(G)^{**}$.  For
$f,g\in C_0(G)$, we have that $\ip{\mu\cdot f}{g} = \int_G f(s) g(s) \ d\mu(s)$
and so $\mu\cdot f = f\mu$ the pointwise product.  It is hence easy to see that
\[ P(\mu\cdot f) = P(f\mu) = \sum_{s\in G} f(s) a_s
= P(\mu)\cdot f. \]
For $\Phi=(\Phi_s)_{s\in G}\in\ell^\infty(G)$, we see that
\[ \ip{P^*(\Phi)\cdot\mu}{f} = \ip{\Phi}{P(\mu\cdot f)}
= \sum_{s\in G} \Phi_s f(s) a_s = \ip{\sum_s \Phi_s a_s \delta_s}{f}. \]
Thus for $\Psi=(\Psi_s)_{s\in G}\in\ell^\infty(G)$,
\[ \ip{P^*(\Psi)P^*(\Phi)}{\mu} = \ip{P^*(\Psi)}{\sum_s \Phi_s a_s \delta_s}
= \sum_{s\in G} \Psi_s \Phi_s a_s = \ip{\Psi\Phi}{P(\mu)}
= \ip{P^*(\Psi\Phi)}{\mu}, \]
showing that $P^*$ is a homomorphism, as required.

As $P$ is a Banach algebra homomorphism $M(G) \rightarrow \ell^1(G)$, by
Lemma~\ref{homo_to_wap}, we have that $P^*$ maps
$\ap(\ell^1(G)) = \ap(G_d)$ into $\ap(M(G))$, as claimed.
\end{proof}

As $P$ is an algebra homomorphism, dualising, we see that
\[ \Delta \circ P^* = (P^*\otimes P^*)\circ \Phi_d, \]
where $\Phi_d:\ell^\infty(G) \rightarrow \ell^\infty(G\times G)$ is the
coproduct for $G_d$.  As $P^*:\ap(G_d)\rightarrow \ap(M(G))=C(K_\ap)$ is a homomorphism,
we get a continuous map $\theta_1 : K_\ap \rightarrow (G_d)^\ap$.  As $P^*$ intertwines
the coproducts, it follows that $\theta_1$ is a semigroup homomorphism.

\begin{lemma}
Consider the continuous semigroup homomorphisms $\theta:(G_d)^\ap\rightarrow K_\ap$
and $\theta_1:K_\ap\rightarrow (G_d)^\ap$.  Then $\theta_1\circ \theta$ is the
identity on $(G_d)^\ap$ and so $\theta$ is a homeomorphism onto its range.
\end{lemma}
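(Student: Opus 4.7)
The plan is to check the identity on the dense subset $G_d \subseteq (G_d)^{\ap}$ and then extend by continuity, after which the topological conclusion about $\theta$ is automatic from compactness and Hausdorff separation.

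Write $\iota : G_d \to (G_d)^{\ap}$ for the canonical map, so that by construction $\theta \circ \iota = \theta_0$. My first step is to compute $\theta_1 \circ \theta_0$ directly. Unravelling the definitions: for $s \in G$ and $g \in \ap(G_d) = C((G_d)^{\ap})$,
\[ g\bigl(\theta_1(\theta_0(s))\bigr) = \ip{P^*(g)}{\delta_s} = \ip{g}{P(\delta_s)}. \]
Since $\delta_s$ is atomic, $P(\delta_s) = \delta_s$, so the right-hand side equals $\ip{g}{\delta_s} = g(\iota(s))$. Because $C((G_d)^{\ap}) = \ap(G_d)$ separates the points of $(G_d)^{\ap}$, this forces $\theta_1(\theta_0(s)) = \iota(s)$, i.e. $\theta_1 \circ \theta \circ \iota = \iota$.

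Second, I would extend by density. The image $\iota(G_d)$ is dense in $(G_d)^{\ap}$, and $\theta_1 \circ \theta$ is continuous, so the two continuous maps $\theta_1 \circ \theta$ and $\id_{(G_d)^{\ap}}$ agree on a dense set and therefore agree on all of $(G_d)^{\ap}$.

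Finally, $\theta$ has a continuous left inverse, so it is injective, and a continuous injection from the compact space $(G_d)^{\ap}$ to the Hausdorff space $K_{\ap}$ is automatically a homeomorphism onto its image. The only mild subtlety is verifying that $P(\delta_s) = \delta_s$ in the decomposition $M(G) = M_c(G) \oplus_1 \ell^1(G)$, but this is immediate from the definition of $P$ as the projection onto the atomic part.
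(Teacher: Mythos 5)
Your proposal is correct and follows essentially the same route as the paper: evaluate $\theta_1\circ\theta_0$ against an arbitrary $F\in\ap(G_d)$, use that $P(\delta_s)=\delta_s$ to see it acts as the canonical inclusion of $G_d$, extend by density and continuity, and conclude the homeomorphism statement from the existence of a continuous left inverse. The only difference is that you make the step $\ip{P^*(F)}{\delta_s}=\ip{F}{P(\delta_s)}=\ip{F}{\delta_s}$ explicit, which the paper leaves implicit.
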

\begin{proof}
For $s\in G$ and $F\in\ap(G_d)$, we calculate that
\[ F\big( \theta_1\theta_0(s) \big) = \ip{\delta_{\theta_0(s)}}{P^*(F)}
= \ip{P^*(F)}{\delta_s} = \ip{F}{\delta_s} = F(s). \]
Hence $\theta_1 \circ \theta_0: G \rightarrow (G_d)^\ap$ is the canonical inclusion.
By continuity, it follows that $\theta_1 \circ \theta$ is the identity on
$(G_d)^\ap$, and so $\theta$ must be a homeomorphism onto its range.
\end{proof}

We now prove a simple fact about semigroups: this is surely a folklore result.

\begin{lemma}\label{semigp_lemma}
Let $K$ be a semigroup, let $H$ be a group, let $\theta:H\rightarrow K$
and $\psi:K\rightarrow H$ be semigroup homomorphisms with $\psi\theta$ the
identity on $H$ and $\theta(e_H)$ a unit for $K$.
Let $K_0$ be the kernel of $\psi$, so $K_0 = \psi^{-1}(\{e_H\})$.
Then $K = H \ltimes K_0$ as a semigroup.

Furthermore, if $K$ is a topological semigroup, $H$ is a topological group,
and $\theta$ and $\psi$ are continuous, then $K = H \ltimes K_0$
as topological semigroups.
\end{lemma}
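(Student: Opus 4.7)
The plan is to make the claimed isomorphism concrete by exhibiting the bijection, then to compute the induced product, and finally to upgrade to the topological setting.

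First I would define $\phi : H \times K_0 \to K$ by $\phi(h, k_0) = \theta(h) k_0$ and check that this is a bijection. Given $k \in K$, set $h = \psi(k)$ and $k_0 = \theta(h^{-1}) k$; since $\psi$ is a homomorphism and $\psi \theta = \id_H$, one has $\psi(k_0) = h^{-1} \psi(k) = e_H$, so $k_0 \in K_0$. Because $\theta(e_H)$ is a unit for $K$ and $\theta$ is a homomorphism, $\theta(h) k_0 = \theta(h) \theta(h^{-1}) k = \theta(e_H) k = k$, giving surjectivity. For injectivity, if $\theta(h_1) k_1 = \theta(h_2) k_2$ with $k_1, k_2 \in K_0$, then applying $\psi$ yields $h_1 = h_2$, and then multiplying on the left by $\theta(h_1^{-1})$ recovers $k_1 = k_2$.

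Next I would transport the product on $K$ back to $H \times K_0$. The computation
\[ \theta(h_1) k_1 \cdot \theta(h_2) k_2 = \theta(h_1 h_2) \cdot \bigl( \theta(h_2^{-1}) k_1 \theta(h_2) \bigr) \cdot k_2 \]
(again using that $\theta(e_H)$ is the unit) suggests defining $\alpha_h : K_0 \to K_0$ by $\alpha_h(k_0) = \theta(h) k_0 \theta(h^{-1})$; one checks $\psi(\alpha_h(k_0)) = h e_H h^{-1} = e_H$ so $\alpha_h$ really maps $K_0$ to $K_0$, that each $\alpha_h$ is a semigroup homomorphism, and $\alpha_{h_1 h_2} = \alpha_{h_1} \circ \alpha_{h_2}$. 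This yields the semidirect product rule
\[ (h_1, k_1)(h_2, k_2) = \bigl( h_1 h_2,\ \alpha_{h_2^{-1}}(k_1)\, k_2 \bigr), \]
making $\phi$ a semigroup isomorphism $H \ltimes K_0 \to K$.

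For the topological statement, continuity of $\phi$ is immediate from continuity of $\theta$ and of multiplication in $K$. For $\phi^{-1}(k) = \bigl( \psi(k),\, \theta(\psi(k)^{-1}) k \bigr)$, continuity follows by composing the continuous maps $\psi$, inversion in the topological group $H$, $\theta$, and multiplication in $K$. Since the semidirect product topology is just the product topology on $H \times K_0$, this shows $\phi$ is a homeomorphism, and hence an isomorphism of topological semigroups.

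I do not expect a serious obstacle here; the only mildly delicate point is remembering that $K$ need not have a two-sided inverse for elements in $\theta(H)$, so cancellation has to be carried out by left-multiplication by $\theta(h^{-1})$ and the explicit use of the hypothesis that $\theta(e_H)$ is a unit. Once that is in hand, the verifications are routine.
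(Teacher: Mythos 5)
Your proof is correct and follows essentially the same route as the paper: the same conjugation action $\alpha_h(k)=\theta(h)k\theta(h^{-1})$ and the same explicit bijection, merely with the $H$-factor written on the left ($\theta(h)k_0$) where the paper writes $k\theta(s)$, giving the mirror-image (isomorphic) semidirect product convention. If anything, you are slightly more careful on the topological side, since you verify continuity of $\phi^{-1}$ explicitly, which the paper leaves implicit.
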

\begin{proof}
Let $H$ act on $K_0$ by
\[ s\cdot k = \theta(s) k \theta(s^{-1}) \qquad (s\in H, k\in K_0). \]
As $\psi(\theta(s)k\theta(s^{-1})) = s \psi(k) s^{-1} = s e_H s^{-1} = e_H$,
it follows that $s\cdot k\in K_0$ as claimed.
Then $H \ltimes K_0$ is the set $H\times K_0$ with the semigroup product
\[ (s,k)(t,l) = (st, k(s\cdot l)) \qquad (s,t\in H, k,l\in K_0). \]
We define a map $\phi:H\ltimes K_0\rightarrow K$ by $\phi(s,k) = k\theta(s)$.
Then
\[ \phi\big( (s,k)(t,l) \big) = k \theta(s) l \theta(s^{-1}) \theta(st)
= k \theta(s) l \theta(t) = \phi(s,k) \phi(t,l), \]
so $\phi$ is a semigroup homomorphism.  If $\phi(s,k) = \phi(t,l)$ then
$k\theta(s) = l\theta(t)$, and so $s = \psi(k\theta(s)) = \psi(l\theta(t)) = t$
and $k = k\theta(e_H) = l\theta(ts^{-1}) = l\theta(e_H)=l$.  Hence $\phi$ is injective.
A calculation shows that for $k\in K$, $k\theta(\psi(k)^{-1}) \in K_0$ and
$\phi(\psi(k), k\theta(\psi(k)^{-1})) = k$, so $\phi$ is a bijection, as required.

When $K$ and $H$ are topological and $\theta$ and $\psi$ are continuous, then
$K_0$ is a closed sub-semigroup of $K$.  The action of $H$ on $K_0$ is
continuous (by joint continuity) and $\phi$ is continuous, as required.
\end{proof}

In our situation, we immediately see the following.

\begin{corollary}
Form the maps $\theta:(G_d)^\ap\rightarrow K_\ap$ and
$\theta_1:K_\ap\rightarrow (G_d)^\ap$ as above.  Let $K_0$ be the kernel
of $\theta_1$.  Then $K_\ap = (G_d)^\ap \ltimes K_0$.
\end{corollary}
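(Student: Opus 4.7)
The plan is to obtain this as an immediate application of Lemma~\ref{semigp_lemma}, so the work reduces to checking the hypotheses of that lemma in our specific setting. We have on hand: a compact topological semigroup $K = K_\ap$ (by Theorem~\ref{get_top_semi}); a topological group $H = (G_d)^\ap$ (as noted in the discussion of the almost periodic compactification, since $G_d$ is a group); continuous semigroup homomorphisms $\theta:(G_d)^\ap\to K_\ap$ and $\theta_1:K_\ap\to (G_d)^\ap$; and the identity $\theta_1\circ\theta = \id_{(G_d)^\ap}$ from the preceding lemma.

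The one remaining hypothesis is that $\theta(e_H)$ is a unit of $K_\ap$. For this I would trace through the definitions: the unit $e_H$ of $(G_d)^\ap$ is the image of the identity $e\in G_d$ under the canonical embedding $G_d\to (G_d)^\ap$. Since $\theta$ was constructed via the universal property so as to make the diagram with $\theta_0$ commute, we get $\theta(e_H) = \theta_0(e)$. But by the proposition preceding Lemma~\ref{discrete_lemma}, $\theta_0$ sends the unit of $G_d$ to the unit of $K_\ap$, and so $\theta(e_H)$ is a unit of $K_\ap$ as required.

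With all hypotheses verified, Lemma~\ref{semigp_lemma} applies directly and yields $K_\ap = (G_d)^\ap \ltimes K_0$ as topological semigroups, where $K_0 = \theta_1^{-1}(\{e_H\})$ is the kernel of $\theta_1$. I do not anticipate any real obstacle: the only minor subtlety is keeping straight which map plays the role of $\psi$ (namely $\theta_1$) and verifying the unit condition, both of which are essentially bookkeeping given the results already established. The proof should therefore be only a few lines, amounting to ``apply Lemma~\ref{semigp_lemma}.''
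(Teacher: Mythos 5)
Your proposal is correct and matches the paper exactly: the corollary is stated as an immediate consequence of Lemma~\ref{semigp_lemma}, with the hypotheses supplied by Theorem~\ref{get_top_semi}, the fact that $(G_d)^\ap$ is a compact group, the preceding lemma giving $\theta_1\circ\theta=\id$, and the earlier proposition showing $\theta_0$ (hence $\theta$) sends the unit to a unit of $K_\ap$. Your verification of the unit hypothesis via the commuting triangle through $\theta_0$ is precisely the intended bookkeeping.
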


By importing some results of \cite{dgh} relating to derivations, we can show that
$K_0$ is not trivial.

\begin{proposition}\label{kzero_non_triv}
For a non-discrete group $G$, we have that $K_0$ is a non-trivial semigroup.
\end{proposition}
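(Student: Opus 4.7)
The plan is to show $K_0 \neq \{e_{K_\ap}\}$ by proving that the map $\theta : (G_d)^\ap \to K_\ap$ fails to be surjective; by the preceding corollary (an application of Lemma~\ref{semigp_lemma}) this is exactly equivalent to $K_0$ being non-trivial. By Gelfand duality this further reduces to showing that $P^* : \ap(G_d) \to \ap(M(G))$ is not surjective.

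To produce an element of $\ap(M(G)) \setminus P^*(\ap(G_d))$ I would take the unit $1$ of the commutative C$^*$-algebra $M(G)^*$, viewed as the total-mass functional $\mu\mapsto\mu(G)$ on $M(G)$. This lies in $\ap(M(G))$ because $\langle \mu\cdot 1, \lambda\rangle = \langle 1,\lambda\mu\rangle = (\lambda\mu)(G) = \lambda(G)\mu(G)$, so $R_1(\mu) = \mu(G)\cdot 1$ is a rank-one, hence compact, operator. By contrast, every $P^*(F)$ pairs with $\mu$ as $\sum_{s\in G}F(s)\mu(\{s\})$ and so annihilates the ideal $M_c(G)$ of non-atomic measures. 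For non-discrete $G$, Haar measure restricted to a compact set is a non-zero positive element of $M_c(G)$, and for any such $\mu$ we have $\langle 1,\mu\rangle = \mu(G) > 0$ while $\langle P^*(F),\mu\rangle = 0$ for every $F \in \ap(G_d)$. Therefore $1 \notin P^*(\ap(G_d))$, so $P^*$ is not surjective, and hence $K_0$ is non-trivial.

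The paper's approach via \cite{dgh} is different: it should leverage the non-weak-amenability of $M(G)$ for non-discrete $G$ (i.e., the existence of a non-zero continuous derivation on $M(G)$) to construct a one-parameter family of characters of $\ap(M(G))$ tangent to the identity character $e_{K_\ap}$, each lying in $K_0$. The main obstacle in that derivation-based approach is translating a first-order object (a derivation on $M(G)$) into a zero-th order one (a character of the C$^*$-subalgebra $\ap(M(G))$ of $M(G)^*$), which requires careful bookkeeping for the Arens product; the direct argument sketched above sidesteps this by simply comparing the units of the two C$^*$-algebras in play.
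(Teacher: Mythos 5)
Your argument is correct, and it reduces the problem to the same statement as the paper does --- namely that $P^*(\ap(G_d)) \subsetneq \ap(M(G))$, which via the semidirect product decomposition $K_\ap = (G_d)^\ap \ltimes K_0$ is equivalent to $K_0$ being non-trivial --- but your witness for the proper containment is genuinely different and considerably more elementary. The paper invokes Dales--Ghahramani--Helemskii to obtain a non-zero continuous point derivation $\Phi$ at the augmentation character $\tilde\varphi$; the identity $\Delta(\Phi)=\Phi\otimes\tilde\varphi+\tilde\varphi\otimes\Phi$ shows $\Phi\in\ap(M(G))$ (indeed $\Phi\in\p(M(G))$), while $\Phi$ vanishes on $\ell^1(G)$ and so cannot lie in $P^*(\ap(G_d))$. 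You instead observe that the unit $1\in M(G)^*$ (total mass) is a rank-one, hence almost periodic, functional, whereas every element of $P^*(\ell^\infty(G))$ annihilates $M_c(G)\neq\{0\}$; so $1\notin P^*(\ap(G_d))$. All the steps check out: $R_1$ is rank one since $\mu\cdot 1=\mu(G)1$, non-discreteness does give a non-zero non-atomic measure (restrict Haar measure to a compact neighbourhood), and the Gelfand-duality bookkeeping ($\theta$ surjective iff $P^*$ surjective onto $\ap(M(G))$, iff $K_0$ trivial) is sound. In effect your witness is the projection $1-\tilde\varphi$: a non-zero element of $\ap(M(G))$ vanishing on every point mass, exhibiting a clopen piece of $K_\ap$ disjoint from $\theta((G_d)^\ap)$. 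What the paper's heavier machinery buys is a more refined new element --- one supported ``at'' the augmentation character rather than merely detecting that $P^*$ fails to be unital --- but for the bare statement of the proposition your route suffices and avoids \cite{dgh} entirely. (Your closing guess about the paper's method is slightly off: it is a point derivation at $\tilde\varphi$, not non-weak-amenability or a one-parameter family of characters, but this does not affect your proof.)
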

\begin{proof}
We have the augmentation character (see \cite[Definition~3.3.29]{dales})
\[ \varphi:\ell^1(G)\rightarrow\mathbb C; \quad
\sum_{s\in G} a_s \delta_s \mapsto \sum_{s\in G} a_s, \]
which, as $M(G) = M_c(G) \oplus \ell^1(G)$, has an extension
$\tilde\varphi:M(G)\rightarrow\mathbb C$ given by
\[ \tilde\varphi(a \oplus \mu) = \varphi(a) \qquad ( a\in\ell^1(G), \mu\in M_c(G)). \]
It is shown in \cite[Theorem~3.2]{dgh} that if $G$ is non-discrete, then there
is a non-zero continuous point derivation at $\tilde\varphi$.  That is, there
exists a non-zero $\Phi\in M(G)^*$ with
\[ \ip{\Phi}{\mu\lambda} = \tilde\varphi(\mu) \ip{\Phi}{\lambda}
+ \ip{\Phi}{\mu} \tilde\varphi(\lambda)
\qquad (\mu,\lambda \in M(G)). \]
Indeed, the proof proceeds as follows.  There exists a non-zero, translation
invariant $\Phi\in M(G)^*$ such that $\ip{\Phi}{\mu}=0$ for $\mu\in\ell^1(G)$
or $\mu\in M_c(G)^2$.  That $\Phi$ is a point derivation follows by a calculation.

It follows immediately that
\[ \Delta(\Phi) = \Phi\otimes\tilde\varphi + \tilde\varphi\otimes\Phi, \]
so that $\Phi \in \ap(M(G))$.  Suppose towards a contradiction that
$\Phi = P^*(\Psi)$ for some $\Psi\in\ap(G_d)$.  Then
\[ \ip{\Phi}{a\oplus\mu} = \ip{\Psi}{a} = \ip{\Phi}{a\oplus 0} = 0
\qquad (a\in\ell^1(G), \mu\in M_c(G)), \]
giving a contraction.  Hence $P^*(\ap(G_d)) \subsetneq \ap(M(G))$, and so
$K_\ap$ is strictly larger than $(G_d)^\ap$; equivalently, $K_0$ is non-trivial.
\end{proof}

Suppose that $G$ is abelian, so that $K_\ap$ is also abelian.
By \cite[Theorem~2.8]{murtus}, as $(C(K_\ap), \Delta)$ is a \emph{quantum semigroup},
we have that $C(K_\ap)$ admits a ``Haar state'', that is, there exists $\mu\in C(K_\ap)^*=
M(K_\ap)$ such that
\[ (\mu\otimes\id)\Delta(F) = (\id\otimes\mu)\Delta(F) = \ip{\mu}{F} 1
\qquad (F\in C(K_\ap)). \]
For $t\in K$, by applying $\delta_t$, we see that
\[ \int_K F(st) \ d\mu(s) = \int_K F(ts) \ d\mu(s) = \int_K F(s) \ d\mu(s)
\qquad (F\in C(K_\ap)). \]
Let $\lambda$ be the image of $\mu$ under $\theta_1$, so that
\[ \ip{\lambda}{f} = \int_K f(\theta_1(s)) \ d\lambda(s) \qquad
(f\in C((G_d)^\ap) = \ap(G_d)). \]
A simple calculation shows that $\lambda$ is the Haar measure on $(G_d)^\ap$.

As shown after \cite[Theorem~2.8]{murtus}, it is not true that $C(L)$ always
carries an invariant probability measure, for a compact semigroup $L$.
It would be interesting to know if $C(K_\ap) = \ap(M(G))$ always carries an
invariant probability measure.

\subsection{Structure semigroup}

Let $\p(M(G))$ be the closure of the collection of $F\in M(G)^*$ such that $\Delta(F)$
is a (finite-rank) tensor in $M(G)^* \otimes M(G)^*$.  This is easily seen to
be a C$^*$-subalgebra of $M(G)^*$, and an $M(G)$-submodule of $M(G)^*$.
Repeating the argument of Theorem~\ref{get_top_semi} yields that
$\p(M(G)) = C(K_\p)$ for some topological semigroup $K_\p$.

Taylor introduced the \emph{structure semigroup} of $G$ in \cite{taylor}.
We shall follow the presentation of \cite{DLS} instead, and define $\Phi = \Phi_{M(G)}$ to
be the character space of $M(G)$.  In our language, $F\in\Phi_{M(G)} \subseteq M(G)^*$
if and only if $\Delta(F)=F\otimes F$.  Let $X_G$ be the closed linear span of $\Phi$
in $M(G)^*$.  Then $X_G\subseteq \p(M(G))$, and again, it can be shown that
$X_G$ is a C$^*$-subalgebra of $M(G)^*$, and an $M(G)$-submodule of $M(G)^*$.
Then the structure semigroup of $G$, written $S(G)$, is the spectrum of $X_G$,
which is again a topological semigroup.

It is asked in \cite{DLS} (in the abelian case) whether $S(G) = K_\ap$.  We can split
this into two questions.  Firstly, if $G$ is abelian, does it follow that
$S(G) = K_\p$, or equivalently, that $X_G = \p(M(G))$?  This is true for a discrete
group $G$, essentially because of the Peter-Weyl theorem, and Fourier analysis,
applied to the compact abelian group $G^\ap$.

Secondly, for a general $G$, do we have that $K_\p = K_\ap$, or equivalently, that
$\p(M(G)) = \ap(M(G))$?  For this question, consider $F\in\ap(M(G))$.  Then by
definition, $\Delta(F) : L^1(X) \rightarrow L^\infty(X)$ is compact.  As $L^\infty(X)$
has the approximation property, it follows that there is a sequence $(T_n)$ of
finite-rank maps $L^1(X)\rightarrow L^\infty(X)$, such that $T_n \rightarrow \Delta(F)$.
Then $K_\p = K_\ap$ if and only if we can always choose the $T_n$ to be of the form
$\Delta(F_n)$ (so that $\Delta(F_n)$ is finite-rank, that is, $F_n \in P(M(G))$).
Again, in the discrete case, this follows from the Peter-Weyl theorem.

\subsection{The antipode}\label{antipode}

Let $(L^\infty(X),\Delta)$ be a commutative Hopf von Neumann algebra.  We shall
call a normal $*$-homomorphism $R:L^\infty(X)\rightarrow L^\infty(X)$ an
\emph{antipode} if $R^2=\id$ and $\Delta R = (R\otimes R) \chi \Delta$, where
$\chi:L^\infty(X\times X) \rightarrow L^\infty(X\times X)$ is the swap map,
$\chi(F)(s,t) = F(t,s)$, for $F\in L^\infty(X\times X)$ and $s,t\in X$.

For example, consider $(C_0(G),\Phi)$ for a locally compact group $G$.  Then
we define $r:C_0(G)\rightarrow C_0(G)$ by $r(f)(s) = f(s^{-1})$ for $f\in C_0(G)$
and $s\in G$.  Then $r$ is an antipode, if we extend the definition to C$^*$-algebras
in the obvious way.  Let $(L^\infty(X),\Delta)$ be induced by $(C_0(G),\Phi)$
as before, so that $L^1(X)=M(G)$.  Define $R_*:L^1(X)\rightarrow L^1(X)$ to be the
map $r^*$, and let $R=R_*^*$.  Then $R$ is a normal $*$-homomorphism, and $R^2=\id$.
For $a,b\in M(G)$ and $f\in C_0(G)$, we see that, as $\Delta_*$ induces the usual
convolution product on $M(G)$,
\[ \ip{r^*\Delta_*(a\otimes b)}{f} = \ip{a\otimes b}{\Delta r(f)}
= \ip{b\otimes a}{(r\otimes r)\Delta(f)}
= \ip{\Delta_*\chi(r^*\otimes r^*)(a\otimes b)}{f}. \]
Hence $R_*\Delta_* = \Delta_* \chi (R_*\otimes R_*)$.  So, for $F\in L^\infty(X)$
and $a,b\in L^1(X)=M(G)$, we see that
\begin{align*} \ip{\Delta R(F)}{a\otimes b} &= \ip{F}{R_*\Delta_*(a\otimes b)}
= \ip{F}{\Delta_*\chi(R_*\otimes R_*)(a\otimes b)} \\
&= \ip{(R\otimes R)\chi\Delta(F)}{a\otimes b}. \end{align*}
Hence $R$ is an antipode on $(L^\infty(X),\Delta)$.

\begin{lemma}
Let $(L^\infty(X),\Delta)$ be a commutative Hopf von Neumann algebra, equipped
with an anitpode $R$.  Then $R$ restricts to give $*$-homomorphisms on
$\ap(L^1(X))$ and $\wap(L^1(X))$.
\end{lemma}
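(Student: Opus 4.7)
The plan is to reduce to (a minor variant of) Lemma~\ref{homo_to_wap} by exploiting that the preadjoint $R_*:L^1(X)\to L^1(X)$ is a bounded Banach algebra \emph{anti-}homomorphism. This is already implicit in the paragraph above the lemma: unpacking $R_*\Delta_* = \Delta_*\chi(R_*\otimes R_*)$ on elementary tensors and recalling $\Delta_*(a\otimes b)=ab$ yields $R_*(ab)=R_*(b)R_*(a)$ for $a,b\in L^1(X)$.

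I would then record an anti-homomorphism analogue of Lemma~\ref{homo_to_wap}: if $T:\mc A\to\mc B$ is a bounded anti-homomorphism and $\mu\in\mc B^*$, then for $a,b\in\mc A$,
\[ \ip{R_{T^*(\mu)}(a)}{b} = \ip{\mu}{T(ba)} = \ip{\mu}{T(a)T(b)} = \ip{L_\mu(T(a))}{T(b)}, \]
so $R_{T^*(\mu)} = T^*\circ L_\mu\circ T$, where $L_\mu(c)=\mu\cdot c$. Since $L_\mu$ is (weakly) compact precisely when $R_\mu$ is --- this is the left/right symmetry of $\ap$ and $\wap$ noted in Section~\ref{sec:intro} --- it follows that $T^*$ sends $\ap(\mc B^*)$ into $\ap(\mc A^*)$ and $\wap(\mc B^*)$ into $\wap(\mc A^*)$.

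Applying this with $T=R_*$ and $T^*=R$ gives exactly that $R$ maps $\ap(L^1(X))$ and $\wap(L^1(X))$ into themselves. Because $R$ is already a $*$-homomorphism on all of $L^\infty(X)$, its restriction to either of these C$^*$-subalgebras (they are C$^*$-subalgebras by \cite[Theorem~1]{daws2}) is automatically a $*$-homomorphism, which completes the proof.

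No serious obstacle is anticipated: the anti-homomorphism variant of Lemma~\ref{homo_to_wap} is a mechanical rerun of its original proof with $L_\mu$ in place of $R_\mu$, and everything else is already in hand. An alternative route, usable for the $\ap$ case only, would be to invoke the characterisation from the proof of Theorem~\ref{get_top_semi} that $F\in\ap(L^1(X))$ iff $\Delta(F)\in L^\infty(X)\inten L^\infty(X)$, and observe directly that $\Delta(R(F))=(R\otimes R)\chi\Delta(F)$ lies in $L^\infty(X)\inten L^\infty(X)$ because $(R\otimes R)$ and $\chi$ both preserve this subalgebra; but the $R_*$-based approach is uniform across $\ap$ and $\wap$ and so seems preferable.
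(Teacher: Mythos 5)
Your argument is correct, and it is worth recording how it relates to the paper's proof, which treats the two halves of the lemma by different mechanisms where you use one. Your starting point --- that dualising the antipode axiom $\Delta R = (R\otimes R)\chi\Delta$ yields $R_*(ab)=R_*(b)R_*(a)$, so $R_*$ is a Banach algebra anti-homomorphism --- is sound; just note that the displayed identity $R_*\Delta_*=\Delta_*\chi(R_*\otimes R_*)$ is derived in the text only for the concrete antipode on $M(G)$, so for the abstract lemma you should perform the (one-line) duality computation from the axiom itself. Your anti-homomorphism variant of Lemma~\ref{homo_to_wap}, namely $R_{T^*(\mu)}=T^*\circ L_\mu\circ T$, combined with the left/right symmetry of $\ap$ and $\wap$ (Schauder and Gantmacher applied to $L_\mu=R_\mu^*\kappa_{\mc A}$), then settles both cases at once. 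For $\wap$ this is essentially the paper's computation in mirror image: the paper shows $\Delta R(F)=R\,\Delta(F)^*\kappa_{L^1(X)}R_*$, and since $\Delta(F)$ is the operator $a\mapsto F\cdot a=L_F(a)$ with $\Delta(F)^*\kappa_{L^1(X)}=R_F$, that identity reads $L_{R(F)}=R\circ R_F\circ R_*$ against your $R_{R(F)}=R\circ L_F\circ R_*$. For $\ap$ the paper argues differently, via the characterisation $F\in\ap(L^1(X))$ if and only if $\Delta(F)\in L^\infty(X)\inten L^\infty(X)$, observing that $(R\otimes R)\chi$ preserves this subalgebra --- exactly the alternative you mention. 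Your uniform route buys a reusable general principle (adjoints of bounded anti-homomorphisms preserve $\ap$ and $\wap$), at the cost of invoking Schauder's theorem and the left/right symmetry; the paper's $\ap$ argument avoids both and is purely algebraic once the tensor-product characterisation is in hand. The final step --- that the restriction of the $*$-homomorphism $R$ to a C$^*$-subalgebra it leaves invariant is again a $*$-homomorphism --- is as automatic as you say.
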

\begin{proof}
We know that $F\in\ap(L^1(X))$ if and only if $\Delta(F)\in L^\infty(X)
\inten L^\infty(X)$.  Hence, for $F\in\ap(L^1(X))$, we see that $\Delta R(F)
= (R\otimes R)\chi\Delta(F) \in L^\infty(X) \inten L^\infty(X)$, and so
$R(F)\in\ap(L^1(X))$, as required.

Now suppose that $F\in\wap(L^1(X))$, so $\Delta(F) : L^1(X) \rightarrow L^\infty(X)$
is weakly-compact.  Then, for $a,b\in L^1(X)$,
\begin{align*} \ip{\Delta R(F)(a)}{b} &= \ip{(R\otimes R)\chi\Delta(F)}{a\otimes b}
= \ip{\Delta(F)}{R_*(b) \otimes R_*(a)} \\
&= \ip{R\Delta(F)^*\kappa_{L^1(X)}R_*(a)}{b}. \end{align*}
Thus $\Delta R(F) = R\Delta(F)^*\kappa_{L^1(X)}R_*$, which is weakly-compact
if $\Delta(F)$ is, as required.
\end{proof}

Hence $R$ induces an \emph{involution} on $K_\ap$, written $s\mapsto s'$.  This
means that $(st)' = t' s'$ for $s,t\in K$, and $R(F)(s) = F(s')$ for
$F\in\ap(M(G))=C(K_\ap)$ and $s\in K_\ap$.  There is no reason to expect this to
be an inverse map on $K_\ap$, but we do have the following.

\begin{proposition}
Consider the map $\theta:(G_d)^{\ap} \rightarrow K_\ap$ as above, and recall that
$(G_d)^\ap$ is a (compact) group.  Then $\theta(s^{-1}) = \theta(s)'$ for
$s\in(G_d)^\ap$.
\end{proposition}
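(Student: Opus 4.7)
The plan is to prove the identity first on the dense image of $G$ in $(G_d)^{\ap}$, and then extend it by continuity.

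First I would unpack what $\theta(s)'$ means on the image of a point mass. Recall that the involution on $K_\ap$ is characterised by $R(F)(s) = F(s')$ for $F \in \ap(M(G)) = C(K_\ap)$. Moreover, from Section~\ref{antipode}, the antipode on $L^\infty(X)$ is given by $R = R_*^*$ where $R_* = r^*$, and $r:C_0(G)\to C_0(G)$ is $r(f)(s) = f(s^{-1})$. A direct computation gives $r^*(\delta_s) = \delta_{s^{-1}}$ in $M(G) = L^1(X)$, since for any $f \in C_0(G)$,
\[ \ip{r^*(\delta_s)}{f} = \ip{\delta_s}{r(f)} = f(s^{-1}) = \ip{\delta_{s^{-1}}}{f}. \]

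Next, for $s \in G$ and $F \in \ap(M(G)) = C(K_\ap)$, I would compute
\[ F\bigl(\theta_0(s)'\bigr) = R(F)(\theta_0(s)) = \ip{R(F)}{\delta_s} = \ip{F}{R_*(\delta_s)} = \ip{F}{\delta_{s^{-1}}} = F\bigl(\theta_0(s^{-1})\bigr). \]
Since $C(K_\ap)$ separates points of $K_\ap$, this yields $\theta_0(s)' = \theta_0(s^{-1})$ for all $s \in G$. Writing $\iota:G_d \to (G_d)^\ap$ for the canonical inclusion, and recalling that $\iota$ is a group homomorphism into the compact group $(G_d)^\ap$, this says $\theta(\iota(s))' = \theta(\iota(s)^{-1})$ for every $s \in G$.

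Finally I would invoke continuity. The map $s \mapsto s'$ on $K_\ap$ is continuous, as it is dual to the $*$-homomorphism $R$ restricted to $C(K_\ap)$, so $s \mapsto \theta(s)'$ is continuous on $(G_d)^\ap$. On the other hand, $(G_d)^\ap$ is a topological group, so $s \mapsto s^{-1}$ is continuous, and hence so is $s \mapsto \theta(s^{-1})$. The two continuous maps $(G_d)^\ap \to K_\ap$ agree on the dense subset $\iota(G) \subseteq (G_d)^\ap$ by the previous step, so they agree everywhere, yielding $\theta(s^{-1}) = \theta(s)'$ for all $s \in (G_d)^\ap$.

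There is no substantial obstacle; the only subtle point is checking that the involution $s\mapsto s'$ on $K_\ap$ is indeed continuous, but this is immediate from its definition via a $*$-homomorphism on $C(K_\ap)$.
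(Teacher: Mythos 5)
Your proof is correct and follows essentially the same route as the paper: both reduce to the computation $\ip{R(F)}{\delta_s} = \ip{F}{r^*(\delta_s)} = \ip{F}{\delta_{s^{-1}}}$ on point masses from $G$, and then extend to all of $(G_d)^\ap$ using density of $G_d$ together with continuity of inversion in the compact group $(G_d)^\ap$. The only cosmetic difference is that the paper unrolls the density argument as a net computation inside $F(\theta(s)') = \lim_\alpha R(F)(\theta_0(s_\alpha))$ (thereby only needing continuity of the function $R(F)$ rather than of the involution $s\mapsto s'$ itself), whereas you phrase it as two continuous maps agreeing on a dense subset; both are sound.
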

\begin{proof}
Recall that, because of joint continuity, the inverse in $(G_d)^\ap$ satisfies
the following property.  Let $s\in (G_d)^\ap$, so we can find a net $(s_\alpha)$
in $G_d$ which converges to $s$.  By possibly moving to a subnet, we have that
$s^{-1} = \lim_\alpha s_\alpha^{-1}$.  Now let $F\in\ap(M(G))$, so that
\begin{align*} F(\theta(s)') &= R(F)(\theta(s)) = \lim_\alpha R(F)(\theta_0(s_\alpha))
= \lim_\alpha \ip{R(F)}{\delta_{s_\alpha}} 
= \lim_\alpha \ip{F}{r^*(\delta_{s_\alpha})} \\
&= \lim_\alpha \ip{F}{\delta_{s_\alpha^{-1}}}
= \lim_\alpha F(\theta_0(s_\alpha^{-1}))
= F(\theta(s^{-1})), \end{align*}
as required.
\end{proof}

We have hence demonstrated various properties of the compact semigroup $K_\ap$.
These do not, however, appear to be enough to characterise $K_\ap$ directly, just
in terms of $G$.

\section{Weakly almost periodic functionals}\label{wap_case}

For a commutative Hopf von Neumann algebra $(L^\infty(X),\Delta)$, we know that
$\wap(L^1(X))$ is a unital commutative C$^*$-algebra, say $C(K_\wap)$.  In this section,
we shall show that $K_\wap$ is a compact \emph{semitopological semigroup}, that
is, a semigroup whose product is separately continuous.  This is in complete
agreement for what happens with $L^1(G)$, see \cite{ulger} and \cite[Section~4.2]{BJM}.

\subsection{Embedding spaces of separately continuous functions}

Let $L^\infty(X)$ be a commutative von Neumann algebra, and let $\Delta:L^\infty(X)
\rightarrow L^\infty(X\times X)$ be a co-associative normal $*$-homomorphism,
turning $L^1(X)$ into a Banach algebra.  We can find a compact, Hausdorff,
hyperstonian space $K$ such that $L^\infty(X) = C(K)$, see, for example,
\cite[Section~1, Chapter~III]{tak}.  Notice, however, that $L^\infty(X\times X)$
is, in general, much larger than $C(K\times K)$.

Let  $SC(K\times K)$ be the space of functions $K\times K\rightarrow\mathbb C$
which are separately continuous.  Obviously $SC(K\times K)$ is a C$^*$-algebra.
For $f\in SC(K\times K)$ and $\mu\in M(K)$,
define functions $(\mu\otimes\iota)f, (\iota\otimes\mu)f :K\rightarrow\mathbb C$ by
\[ (\mu\otimes\iota)f (k) = \int_K f(l,k) \ d\mu(l), \quad
(\iota\otimes\mu)f (k) = \int_K f(k,l) \ d\mu(l) \qquad (k\in K). \]
It is shown in \cite[Lemma~2.2]{runde} (using a result of Grothendieck) that
actually $(\mu\otimes\iota)f$ and $(\iota\otimes\mu)f$ are in $C(K)$.

Then \cite[Lemma~2.4]{runde} shows that
\[ \ip{(\mu\otimes\iota)f}{\lambda} = \ip{(\iota\otimes\lambda)f}{\mu}
\qquad (f\in SC(K\times K), \mu,\lambda\in M(K)). \]
We write $\ip{\mu\otimes\lambda}{f}$ for this.  Furthermore, \cite[Lemma~2.4]{runde}
shows that the map $M(K)\times M(K)\rightarrow\mathbb C; (\mu,\lambda)\mapsto
\ip{\mu\otimes\lambda}{f}$ is separately weak$^*$-continuous in each variable.
These results rely upon \cite{johnson}, which shows that each $f\in SC(K\times K)$
is $\mu$-measurable for any $\mu\in M(K\times K)$.

For $a\in L^1(X)$, we have that $\kappa_{L^1(X)}(a) \in L^\infty(X)^* = C(K)^*$ and hence
induces a measure, say $\mu_a \in M(K)$.  Define a map
$\theta_{sc}:SC(K\times K)\rightarrow L^\infty(X\times X) = (L^1(X)\proten L^1(X))^*$ by
\[ \ip{\theta_{sc}(f)}{a\otimes b} = \ip{\mu_a\otimes \mu_b}{f}
\qquad (f\in SC(K\times K), a,b\in L^1(X)). \]

\begin{proposition}
The map $\theta_{sc}$ is an isometric $*$-homomorphism.
\end{proposition}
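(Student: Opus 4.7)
The plan is to verify in sequence: well-definedness and the basic structural properties of $\theta_{sc}$, multiplicativity (the main step), and isometry.

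For the basic properties, observe that $(a,b)\mapsto\langle\mu_a\otimes\mu_b,f\rangle$ is bilinear and bounded by $\|a\|\|b\|\|f\|_\infty$, so it extends uniquely to a bounded functional on $L^1(X)\proten L^1(X) = L^1(X\times X)$; this yields $\theta_{sc}(f)\in L^\infty(X\times X)$ with $\|\theta_{sc}(f)\|\leq\|f\|_\infty$. Linearity in $f$ is immediate, and the $*$-property $\theta_{sc}(\bar f)=\theta_{sc}(f)^*$ follows from $\mu_{\bar a}=\overline{\mu_a}$ together with the defining pairing of the involution on $L^\infty(X\times X)$ with $L^1(X\times X)$.

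Multiplicativity is the main step. On $C(K)\otimes C(K)\subseteq SC(K\times K)$, one computes $\theta_{sc}(f_1\otimes f_2)=f_1\otimes f_2$ under the identification $L^\infty(X)\vnten L^\infty(X)=L^\infty(X\times X)$, so $\theta_{sc}$ restricts to the canonical inclusion, and by sup-norm density is a $*$-homomorphism on all of $C(K\times K)$. Next, for $f\in SC(K\times K)$ and $g=g_1\otimes g_2\in C(K)\otimes C(K)$, both $\langle\theta_{sc}(fg),a\otimes b\rangle$ and $\langle\theta_{sc}(f)\theta_{sc}(g),a\otimes b\rangle$ reduce to $\langle\mu_{g_1\cdot a}\otimes\mu_{g_2\cdot b},f\rangle$ via the module identity $g_1\cdot\mu_a=\mu_{g_1\cdot a}$; extending by density then gives multiplicativity whenever one factor lies in $C(K\times K)$. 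For two general $f,g\in SC(K\times K)$---the principal obstacle, since $SC(K\times K)$ is strictly larger than $C(K\times K)$ in sup norm---I would work via the module interpretation
\[ \langle\theta_{sc}(f)\theta_{sc}(g),a\otimes b\rangle = \langle\theta_{sc}(g),\,\theta_{sc}(f)\cdot(a\otimes b)\rangle, \]
and identify $\theta_{sc}(f)\cdot(a\otimes b)\in L^1(X\times X)$ as the normal measure $f\,d(\mu_a\otimes\mu_b)$ on $K\times K$. The previous case shows this agreement on the weak$^*$-dense subalgebra $C(K\times K)\subseteq L^\infty(X\times X)$ (which determines elements of the predual $L^1(X\times X)$), while Johnson's measurability result ensures the measure is meaningful. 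Pairing with $\theta_{sc}(g)$ and invoking the Fubini identity $\langle\mu_a\otimes\mu_b,fg\rangle=\int fg\,d(\mu_a\otimes\mu_b)$ for SC functions then closes the loop; making this last pairing rigorous is the most delicate point.

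For isometry, we already have $\|\theta_{sc}(f)\|\leq\|f\|_\infty$. By Goldstine's theorem applied to $L^1(X)\hookrightarrow L^1(X)^{**}=L^\infty(X)^*=M(K)$, the set $\{\mu_a:\|a\|\leq 1\}$ is weak$^*$-dense in the unit ball of $M(K)$. Combined with the separate weak$^*$-continuity of $(\mu,\lambda)\mapsto\langle\mu\otimes\lambda,f\rangle$ from Runde's Lemma~2.4, this yields $\|\theta_{sc}(f)\|=\sup\{|\langle\mu\otimes\lambda,f\rangle|:\|\mu\|,\|\lambda\|\leq 1\}=\|f\|_\infty$, where the final equality is achieved via point masses $\delta_k\otimes\delta_l$.
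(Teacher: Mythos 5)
Your proof follows essentially the same route as the paper's: contractivity plus iterated weak$^*$-limits onto point masses for the isometry, and, for multiplicativity, the identification of the module action $\theta_{sc}(f)\cdot(a\otimes b)$ with the measure $f\,(\mu_a\otimes\mu_b)$ by testing against the linearly dense elementary tensors $x\otimes y$ in $C(K\times K)$. The one step you flag as delicate is closed in the paper by expanding $\theta_{sc}(f)\cdot(a\otimes b)=\sum_n c_n\otimes d_n$ with $\sum_n\|c_n\|\,\|d_n\|<\infty$ in $L^1(X)\proten L^1(X)$, so that $\ip{\theta_{sc}(g)}{\theta_{sc}(f)\cdot(a\otimes b)}=\sum_n\ip{\mu_{c_n}\otimes\mu_{d_n}}{g}$ holds by the very definition of $\theta_{sc}$, and the equality of this absolutely convergent sum with $\ip{\mu_a\otimes\mu_b}{gf}$ then follows from the identity of measures $\sum_n\mu_{c_n}\otimes\mu_{d_n}=f(\mu_a\otimes\mu_b)$ together with Johnson's measurability result --- exactly the Fubini point you anticipate.
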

\begin{proof}
Clearly $\theta_{sc}$ is a contraction.  For $k\in K$, let $\delta_k\in M(K)=L^1(X)^{**}$
be the point-mass at $k$, so that $\delta_k$ is the weak$^*$-limit of norm-one elements
of $L^1(X)$, say $a^{(k)}_\alpha \rightarrow \delta_k$.  By separate weak$^*$-continuity,
for $f\in SC(K\times K)$ and $k,l\in K$,
\[ f(k,l) = \lim_\alpha \lim_\beta \ip{\mu_{a^{(k)}_\alpha} \otimes\mu_{a^{(l)}_\alpha}}{f}
= \lim_\alpha \lim_\beta \ip{\theta_{sc}(f)}{a^{(k)}_\alpha \otimes a^{(l)}_\alpha}.\]
By taking the supremum over all $k$ and $l$, this shows that
$\theta_{sc}$ is an isometry.

To show that $\theta_{sc}$ is a $*$-homomorphism, we argue as follows.
Let $f\in SC(K\times K)$ and $x,y\in L^\infty(X)=C(K)$, and set $g=x\otimes y$.
Such $g$ are linearly dense in $C(K\times K)$ and hence separate the points of
$M(K)\proten M(K)$.  We also regard $g$ as a member of $L^\infty(X\times X)$.
Let $a,b\in L^1(X)$ and consider $\theta_{sc}(f)(a\otimes b)$, defined as usual by
\[ \ip{F}{\theta_{sc}(f)(a\otimes b)} = \ip{F \theta_{sc}(f)}{a\otimes b}
\qquad (F\in L^\infty(X\times X)). \]
Then $\theta_{sc}(f)(a\otimes b) \in L^1(X)\proten L^1(X)$, so we can find
sequences $(c_n),(d_n)$ in $L^1(X)$ with $\theta_{sc}(f)(a\otimes b)
= \sum_n c_n \otimes d_n$ and $\sum_n \|c_n\| \|d_n\| < \infty$.  Then
\begin{align*}
\sum_n \ip{x}{c_n} \ip{y}{d_n} &=
\ip{g}{\theta_{sc}(f)(a\otimes b)}
= \ip{\theta_{sc}(f)}{(a\otimes b)g} \\
&= \ip{\theta_{sc}(f)}{xa\otimes yb}
= \ip{\mu_{xa} \otimes \mu_{yb}}{f}. \end{align*}
However, it's easy to see that $\mu_{xa} = x \mu_a$, so
\[ \sum_n \ip{\mu_{c_n}}{x} \ip{\mu_{d_n}}{y} = \int_{K\times K} x(k) y(l) f(k,l)
\ d\mu_a(k) d\mu_b(l). \]
As $x,y\in C(K)$ were arbitrary, we conclude that
\[ \sum_n \mu_{c_n} \otimes \mu_{d_n} = f(\mu_a\otimes\mu_b) \]
as measures on $K\times K$.  Then, for $h \in SC(K\times K)$,
\begin{align*} \ip{\theta_{sc}(h) \theta_{sc}(f)}{a\otimes b}
&= \ip{\theta_{sc}(h)}{\theta_{sc}(f)(a\otimes b)}
= \sum_n \ip{\theta_{sc}(h)}{c_n\otimes d_n} \\
&= \sum_n \ip{\mu_{c_n} \otimes \mu_{d_n}}{h}
= \ip{f \mu_a\otimes\mu_b}{h} = \ip{\mu_a\otimes\mu_b}{hf}
= \ip{\theta_{sc}(hf)}{a\otimes b}. \end{align*}
As $a,b\in L^1(X)$ were arbitrary, this shows that
$\theta_{sc}$ is a homomorphism.  A similar argument establishes that
$\theta_{sc}$ is a $*$-homomorphism.
\end{proof}

We henceforth identify $SC(K\times K)$ with a $*$-subalgebra of $L^\infty(X\times X)$.

\subsection{Application to WAP functionals}

For a Banach algebra $\mc A$ we write $\wap(\mc A)$ for the weakly almost periodic
functionals on $\mc A$, which is a closed $\mc A$-submodule of $\mc A^*$.
As shown in \cite[Lemma~1.4]{ll} (for commutative algebras; the proof readily extends
to the general case, compare \cite[Proposition~2.4]{daws}) the Arens products drop to
a well-defined product on $\wap(\mc A)^*$ which is separately weak$^*$-continuous;
that is, $\wap(\mc A)^*$ is a dual Banach algebra.

\begin{proposition}\label{what_is_wap}
Let $(L^\infty(X),\Delta)$ be a commutative Hopf von Neumann algebra, and let
$L^\infty(X) = C(K)$ as before.  For $F\in L^\infty(X)$, define
$f:K\times K\rightarrow\mathbb C$ by
\[ f(k,l) = \ip{\delta_k \aone \delta_l}{F} \qquad (k,l\in K). \]
If $F\in\wap(L^1(X))$ then $f\in SC(K\times K)$ and $\theta_{sc}(f) = \Delta(F)$.
Conversely, if $\Delta(F)=\theta_{sc}(g)$ for some $g\in SC(K\times K)$, then
$F\in\wap(L^1(X))$, and $f=g$.
\end{proposition}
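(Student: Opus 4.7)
The plan is to pair both sides of the proposed identities against elementary tensors $a\otimes b$ with $a,b\in L^1(X)$, then exploit the separate weak$^*$-continuity of the Arens products, Runde's integration properties for SC functions, and the weak$^*$-density of $\kappa_{L^1(X)}(L^1(X))$ in $M(K) = L^1(X)^{**}$.

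For the forward direction, I would first observe that $\ip{\delta_k\aone\delta_l}{F} = \ip{\delta_k}{\delta_l\cdot F}$ with $\delta_l\cdot F\in C(K)$, so $k\mapsto f(k,l)$ is continuous. The hypothesis $F\in\wap(L^1(X))$ allows the two Arens products to be swapped against $F$, giving $f(k,l) = \ip{\delta_l}{F\cdot\delta_k}$ with $F\cdot\delta_k\in C(K)$, so $l\mapsto f(k,l)$ is also continuous; hence $f\in SC(K\times K)$. To identify $\theta_{sc}(f)$ with $\Delta(F)$, I pair against $a\otimes b$: the right side is $\ip{F}{ab} = \ip{\mu_a\aone\mu_b}{F}$ since $\kappa_{L^1(X)}$ is a homomorphism for $\aone$, while the left side is $\int\int f\,d\mu_a\,d\mu_b$. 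Using WAP, $\Psi\mapsto\ip{\mu_a\aone\Psi}{F} = \ip{\mu_a\atwo\Psi}{F}$ is weak$^*$-continuous on $M(K)$; evaluated at $\delta_l$ it returns $\int f(k,l)\,d\mu_a(k) = (\mu_a\otimes\iota)f(l)$, which by Runde's lemma lies in $C(K)$. Two weak$^*$-continuous functionals on $M(K)$ that agree on point masses agree everywhere, so evaluating at $\mu_b$ yields $\ip{\mu_a\aone\mu_b}{F} = \int (\mu_a\otimes\iota)f\,d\mu_b$, as desired.

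For the converse, define $B:M(K)\times M(K)\to\mathbb{C}$ by $B(\Phi,\Psi) = \ip{\Phi\otimes\Psi}{g}$ in Runde's notation; $B$ is separately weak$^*$-continuous, and $B(\mu_a,\mu_b) = \ip{\theta_{sc}(g)}{a\otimes b} = \ip{\Delta(F)}{a\otimes b} = \ip{\mu_a\aone\mu_b}{F} = \ip{\mu_a\atwo\mu_b}{F}$. Fixing $\mu_b$, the weak$^*$-continuous functionals $\Phi\mapsto\ip{\Phi\aone\mu_b}{F}$ and $\Phi\mapsto B(\Phi,\mu_b)$ agree on the weak$^*$-dense set $\{\mu_a:a\in L^1(X)\}$, hence everywhere. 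Invoking $\Phi\aone\kappa(b) = \Phi\atwo\kappa(b) = \Phi\cdot b$, a second density argument in the other slot---this time against the weak$^*$-continuity of $\atwo$ in the second variable---extends this to $\ip{\Phi\aone\Psi}{F} = B(\Phi,\Psi) = \ip{\Phi\atwo\Psi}{F}$ for all $\Phi,\Psi$. Agreement of the two Arens products against $F$ is the standard characterisation that $F\in\wap(L^1(X))$. Finally, $f(k,l) = \ip{\delta_k\aone\delta_l}{F} = B(\delta_k,\delta_l) = g(k,l)$.

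The main obstacle is bookkeeping: keeping straight which Arens product is weak$^*$-continuous in which variable, and invoking Runde's and Grothendieck's results carefully enough that the iterated integrals of SC functions are unambiguous and the induced bilinear form on $M(K)\times M(K)$ is genuinely separately weak$^*$-continuous. No new tool beyond what has already been assembled in the section appears necessary.
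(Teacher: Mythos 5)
Your argument is correct, and the forward half is essentially the paper's: continuity in each variable comes from $\ip{\delta_k\aone\delta_l}{F}=\ip{\delta_k}{\delta_l\cdot F}=\ip{\delta_l}{F\cdot\delta_k}$ with $\delta_l\cdot F, F\cdot\delta_k\in C(K)$ (the paper packages this as separate weak$^*$-continuity of the quotient product on $\wap(L^1(X))^*$ from Lau--Loy, which is the same fact), and the identification $\theta_{sc}(f)=\Delta(F)$ is the same iterated-integral computation, merely reorganised through density of point masses. The converse is where you genuinely diverge. The paper verifies weak compactness of $a\mapsto F\cdot a$ directly: it shows $(a\otimes\iota)\Delta(F)=(\mu_a\otimes\iota)g\in C(K)$, takes a bounded net $(a_\alpha)$, passes to a subnet with $\mu_{a_\alpha}\rightarrow\lambda$ weak$^*$, and checks that $(a_\alpha\otimes\iota)\Delta(F)\rightarrow(\lambda\otimes\iota)g$ weakly; it then gets $f=g$ from injectivity of $\theta_{sc}$. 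You instead run a two-step density argument to show $\ip{\Phi\aone\Psi}{F}=\ip{\Phi\otimes\Psi}{g}=\ip{\Phi\atwo\Psi}{F}$ for all $\Phi,\Psi$, and invoke the classical characterisation (Pym/Grothendieck; see \cite[Theorem~2.6.17]{dales}) that $F\in\wap(L^1(X))$ exactly when the two Arens products agree at $F$. Your route buys a cleaner structure and gives $f=g$ for free by evaluating at $(\delta_k,\delta_l)$, at the cost of importing that nontrivial (though standard and consistent with the references already used in this section) equivalence; the paper's route is more self-contained, working only from the definition of weak compactness. Do take care, as you note, that the second density step requires weak$^*$-continuity of $\Psi\mapsto\ip{\Phi\atwo\Psi}{F}=\ip{\Psi}{F\cdot\Phi}$ rather than of $\aone$ in that slot --- you have this right, and the bookkeeping goes through.
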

\begin{proof}
Suppose that $F\in\wap(L^1(X))$.  As $K\rightarrow L^\infty(X)^*=M(K);
k\mapsto\delta_k$ is continuous for the weak$^*$-topology, and the product on $\wap(L^1(X))^*$
is separately weak$^*$-continuous, it follows that $f \in SC(K\times K)$.
We claim that $\theta_{sc}(f) = \Delta(F)$.  Indeed, let
$a\in L^1(X)$, and observe that for $x\in L^\infty(X)=C(K)$,
\[ \ip{x}{a} = \ip{\mu_a}{x} = \int_K \ip{\delta_k}{x} \ d\mu_a(k). \]
Thus, for $a,b\in L^1(X)$, and using that $\wap(\mc A)$ is an $\wap(\mc A)^*$
bimodule,
\begin{align*} \ip{\theta_{sc}(f)}{a\otimes b}
&= \int_{K\times K} \ip{\delta_k \aone \delta_l}{F}
\ d\mu_a(k) d\mu_b(l) = \int_K \int_K \ip{\delta_k}{\delta_l\cdot F} \ d\mu_a(k) d\mu_b(l)
\\ &= \int_K \ip{\delta_l\cdot F}{a} \ d\mu_b(l)
= \int_K \ip{\delta_l}{F\cdot a} \ d\mu_b(l)
= \ip{F\cdot a}{b} = \ip{\Delta(F)}{a\otimes b}, \end{align*}
as required.

Conversely, if $F\in L^\infty(X)$ with $\Delta(F)=\theta_{sc}(g)$ for some
$g\in SC(K\times K)$, then for $a,b\in L^1(X)$,
\[ \ip{(a\otimes\iota)\Delta(F)}{b} = \ip{\theta_{sc}(g)}{a\otimes b}
= \ip{\mu_a\otimes\mu_b}{g} = \ip{\mu_b}{(\mu_a\otimes\iota)g}, \]
so that $(a\otimes\iota)\Delta(F) = (\mu_a\otimes\iota)g \in C(K) = L^\infty(X)$.
Thus, for $\mu\in L^\infty(X)^*=M(K)$,
\[ \ip{\mu}{(a\otimes\iota)\Delta(F)} = \ip{\mu}{(\mu_a\otimes\iota)g}
= \ip{\mu_a}{(\iota\otimes\mu)g}. \]
Now let $(a_\alpha)$ be a bounded net in $L^1(X)$.  By moving to a subnet,
we may suppose that $\mu_{a_\alpha}\rightarrow \lambda \in M(K)$ weak$^*$.
For $\mu\in M(K)$, we have
\[ \lim_\alpha \ip{\mu}{(a_\alpha\otimes\iota)\Delta(F)}
= \lim_\alpha \ip{\mu_{a_\alpha}}{(\iota\otimes\mu)g}
= \ip{\lambda}{(\iota\otimes\mu)g}
= \ip{\mu}{(\lambda\otimes\iota)g}, \]
so we see that $(a_\alpha\otimes\iota)\Delta(F) \rightarrow (\lambda\otimes\iota)g$ weakly.
Thus $F\in\wap(L^1(X))$.  As $\theta_{sc}$ is injective, it follows that $f=g$.
\end{proof}

It is worth making a link with Theorem~\ref{get_top_semi}.  Firstly, note that a simple
check shows that $\theta_{sc}$ extends the natural embedding of $C(K\times K) = L^\infty(X)
\inten L^\infty(X)$ into $L^\infty(X\times X)$.  If $F\in\ap(L^1(X))$, then
as the product on $\ap(L^1(X))^*$ is jointly continuous, it follows that $f$ will also
be jointly continuous, so that $f\in C(K\times K) \subseteq SC(K\times K)$.
Conversely, if $\Delta(F) = \theta_{sc}(g)$ for some $g\in C(K\times K)$, then $f=g$
and by the same weak$^*$-approximation argument as used above, it follows that $F$ is
almost periodic.

The following is now immediate!

\begin{theorem}
Let $(L^\infty(X),\Delta)$ be a commutative Hopf von Neumann algebra.
Then $\wap(L^1(X))$ is a C$^*$-algebra.
\end{theorem}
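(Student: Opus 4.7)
The plan is to read off the theorem from Proposition~\ref{what_is_wap} combined with the structural properties of $\theta_{sc}$ already established.

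First, I would rephrase Proposition~\ref{what_is_wap} as an identity:
\[
\wap(L^1(X)) = \Delta^{-1}\bigl( \theta_{sc}(SC(K\times K)) \bigr),
\]
viewing both sides as subsets of $L^\infty(X)$. The forward inclusion comes from the first half of the proposition (which gives an explicit $f \in SC(K\times K)$ with $\theta_{sc}(f) = \Delta(F)$), and the reverse inclusion comes from the converse half.

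Next, I would check that the right-hand side is a C$^*$-subalgebra of $L^\infty(X)$. Since $SC(K\times K)$ is a unital C$^*$-algebra under pointwise operations, and $\theta_{sc}$ is an isometric $*$-homomorphism (by the preceding Proposition), its image is a norm-closed $*$-subalgebra of $L^\infty(X\times X)$. Because $\Delta$ is a normal (hence continuous) unital $*$-homomorphism, the preimage of any C$^*$-subalgebra is again a C$^*$-subalgebra: it is closed under the vector space operations, under products, under involution, and it is norm-closed by continuity of $\Delta$.

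Combining these two observations gives the theorem immediately. There is no real obstacle here; the genuine content has already been packaged into Proposition~\ref{what_is_wap} and into the $*$-homomorphism property of $\theta_{sc}$. The only mild point to mention is unitality, which is automatic since $\Delta(1) = 1 = \theta_{sc}(1)$ so $1 \in \wap(L^1(X))$, confirming that we obtain a unital C$^*$-subalgebra and hence $\wap(L^1(X)) = C(K_\wap)$ for a compact Hausdorff space $K_\wap$.
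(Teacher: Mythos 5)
Your argument is correct and is essentially the paper's own proof: both identify $\wap(L^1(X))$ as $\Delta^{-1}\bigl(\theta_{sc}(SC(K\times K))\bigr)$ via Proposition~\ref{what_is_wap} and then observe that the preimage of the C$^*$-subalgebra $\theta_{sc}(SC(K\times K))$ under the $*$-homomorphism $\Delta$ is a C$^*$-subalgebra. Your additional remarks on norm-closedness and unitality merely make explicit what the paper leaves implicit.
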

\begin{proof}
The previous Proposition shows that $F\in\wap(L^1(X))$ if and only if
$\Delta(F)\in \theta_{sc}(SC(K\times K))$.  As $\Delta$ is a $*$-homomorphism,
and $\theta_{sc}(SC(K\times K))$ is a C$^*$-subalgebra of $L^\infty(X)$,
it follows that $\wap(L^1(X))$ is a C$^*$-algebra.
\end{proof}

However, we can now easily prove more about the structure of $\wap(L^1(X))$.

\begin{theorem}\label{wap_semitop}
Let $(L^\infty(X),\Delta)$ be a commutative Hopf von Neumann algebra, and let
$K_\wap$ be the character space of $\wap(L^1(X))$.
The map $\Delta$, which restricts to a map $\wap(L^1(X)) \rightarrow
\theta_{sc}(SC(K\times K))$, induces a $*$-homomorphism
\[ \Delta_\wap:C(K_\wap) \rightarrow SC(K_\wap\times K_\wap). \]
The adjoint $\Delta_\wap^*:M(K_\wap)\proten M(K_\wap)\rightarrow M(K_\wap)$
is just the product on $\wap(L^1(X))^*$.  Furthermore, $\Delta_\wap$ induces
a separately continuous (that is, semitopological) semigroup product on $K_\wap$.
At the level of Banach algebras, this product ``is'' the Arens product.
\end{theorem}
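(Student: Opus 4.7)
By Proposition~\ref{what_is_wap}, $\Delta$ restricts to a map from $\wap(L^1(X))$ into $\theta_{sc}(SC(K\times K))$, so composing with $\theta_{sc}^{-1}$ gives a $*$-homomorphism $\wap(L^1(X))\to SC(K\times K)$, sending $F$ to $f(k,l)=\ip{\delta_k\aone\delta_l}{F}$. To produce $\Delta_\wap$, I would exploit the continuous surjection $q:K\to K_\wap$ dual to the inclusion of C$^*$-algebras $\wap(L^1(X))\hookrightarrow C(K)$, and show that $f$ descends through $q\times q$. The point is that $\wap(L^1(X))$ is a bimodule over $L^1(X)^{**}$, so $\delta_l\cdot F\in\wap(L^1(X))$; hence $f(\cdot,l)=(\delta_l\cdot F)(\cdot)$ factors through $q$. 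Because $\aone=\atwo$ on $\wap(L^1(X))$, the same argument with $F\cdot\delta_k$ handles the other variable. A quotient-map argument (using that $q$ is a continuous surjection between compact Hausdorff spaces) then yields $\tilde f\in SC(K_\wap\times K_\wap)$, and the assignment $F\mapsto\tilde f$ is the desired $*$-homomorphism $\Delta_\wap$.

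For the adjoint statement, the results of the previous subsection give the separately weak$^*$-continuous pairing $\ip{\mu\otimes\lambda}{g}$ on $M(K_\wap)\times M(K_\wap)\to\mathbb C$ for $g\in SC(K_\wap\times K_\wap)$, which factors through $M(K_\wap)\proten M(K_\wap)$. Pre-dualising $\Delta_\wap$ against this pairing produces a map $\Delta_\wap^*:M(K_\wap)\proten M(K_\wap)\to M(K_\wap)$. I would verify that this equals the Arens product on $\wap(L^1(X))^*=M(K_\wap)$ first on pairs of point masses $\mu=\delta_{q(k)}$ and $\lambda=\delta_{q(l)}$, where it reduces to $\tilde f(q(k),q(l))=\ip{\delta_k\aone\delta_l}{F}$, and then extend by separate weak$^*$-continuity, available on both sides: the Arens product descends to a separately weak$^*$-continuous product on $\wap(L^1(X))^*$ by the general dual Banach algebra theory recalled before Proposition~\ref{what_is_wap}, and the $SC$-pairing is separately weak$^*$-continuous by the cited result of Runde.

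Finally, for $(s,t)\in K_\wap\times K_\wap$, the functional $f\mapsto(\Delta_\wap f)(s,t)$ on $C(K_\wap)$ is multiplicative, and nonzero because $\Delta_\wap$ is unital; it is therefore a character, defining a point which I denote $st\in K_\wap$, with $(\Delta_\wap f)(s,t)=f(st)$ for all $f$. Associativity of this product drops from coassociativity of $\Delta$. Separate continuity is now immediate: for fixed $t$, the function $s\mapsto f(st)=(\Delta_\wap f)(s,t)$ is continuous because $\Delta_\wap f\in SC(K_\wap\times K_\wap)$, and continuity of $s\mapsto st$ into the compact Hausdorff space $K_\wap$ is tested by all such $f$; symmetrically for $t\mapsto st$. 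That this semigroup product ``is'' the Arens product at the Banach algebra level is precisely the adjoint identification of the previous paragraph.

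I expect the main obstacle to be the descent argument of the first paragraph: both the fact that $\wap(L^1(X))$ is an $L^1(X)^{**}$-bimodule and the coincidence $\aone=\atwo$ on $\wap(L^1(X))$ must be used to force the $k$- and $l$-slices of $f$ to factor through $q$, and some care is needed to see that the resulting $\tilde f$ is genuinely separately continuous rather than merely separately Borel. Once the descent is in hand, the remaining steps are formal consequences of Proposition~\ref{what_is_wap} and the properties of the $SC$-embedding.
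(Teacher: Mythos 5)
Your proposal is correct and follows essentially the same route as the paper: descend $f(k,l)=\ip{\delta_k\aone\delta_l}{F}$ through the quotient $K\to K_\wap$ using that the slices $\delta_l\cdot F$ and $F\cdot\delta_k$ lie in $\wap(L^1(X))$, identify the adjoint with the Arens product on $\wap(L^1(X))^*$, and read off the separately continuous semigroup structure from the $SC$-valued coproduct. Your explicit appeal to the coincidence of the two Arens products on $\wap(L^1(X))$ and to the quotient-map property of the surjection $K\to K_\wap$ only makes explicit what the paper uses implicitly.
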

\begin{proof}
Let $F\in\wap(L^1(X))$, and let $f\in SC(K\times K)$ with $\theta_{sc}(f)=\Delta(F)$.
Then, for $k,l\in K$,
\[ f(k,l) = \ip{\delta_k\aone\delta_l}{F}
= \ip{\delta_k}{(\iota\otimes\delta_l)f}
= \ip{\delta_l}{(\delta_k\otimes\iota)f}. \]
So, with reference to the proof above,
$(\delta_k\otimes\iota)f = F\cdot\delta_k \in \wap(L^1(X))$ and
$(\iota\otimes\delta_k)f = \delta_k\cdot F \in \wap(L^1(X))$.

Hence $(\delta_k\otimes\iota)f, (\iota\otimes\delta_k)f$ and are members of
$C(K_{\wap})$ for each $k\in K$.  The inclusion $\wap(L^1(X)) =C(K_\wap) \rightarrow C(K)$
induces a continuous surjection $j:K\rightarrow K_{\wap}$.  We claim that we can define
$f_0 \in SC(K_{\wap}\times K_{\wap})$ by
\[ f_0(j(k), j(l)) = f(k,l) \qquad (k,l\in K). \]
Indeed, this is well-defined, for if $j(k)=j(k')$ and $j(l)=j(l')$ then
\begin{align*} f(k,l) &= \ip{\delta_l}{(\delta_k\otimes\iota)f}
= \ip{\delta_{l'}}{(\delta_k\otimes\iota)f} = f(k,l')
= \ip{\delta_k}{(\iota\otimes\delta_{l'})f} \\
&= \ip{\delta_{k'}}{(\iota\otimes\delta_{l'})f} = f(k',l'). \end{align*}
That $f_0$ is separately continuous is immediate, as the same is true of $f$,
and using that $j$ is a closed map.

Denote $f_0$ by $\Delta_\wap(F)$, so that $\Delta_\wap$ is a linear map
$C(K_\wap) \rightarrow SC(K_\wap\times K_\wap)$.
The map $f\mapsto f_0$ is clearly a
$*$-homomorphism, and as $f = \theta_{sc}^{-1}\Delta(F)$, it follows that
$\Delta_\wap$ is also a $*$-homomorphism.  So we have
\[ \Delta_{\wap}: C(K_{\wap}) \rightarrow SC(K_{\wap}\times K_{\wap}), \]
a $*$-homomorphism.  The adjoint $\Delta_\wap^*$ induces a map $M(K_\wap)\proten
M(K_\wap)\rightarrow M(K_\wap)$, and this is simply the Arens product on
$\wap(L^1(X))^* = M(K_\wap)$; in this sense, we could say that $\Delta_\wap$ is
coassociative.  In particular, for $k,l\in K_\wap$, we have that $\delta_k\aone\delta_l
= \Delta_\wap^*(\delta_k\otimes\delta_l)$ is a character on $C(K_\wap)$, and hence
is identified with a point in $K_\wap$.  So $K_\wap$ carries a product, and it is
easy to see that this is associative.  As the product on $M(K_\wap)$ is separately
continuous, the semigroup product is separately continuous.
\end{proof}

The previous result is, to the author, still surprising, for the following
reason.  Then fact that $L^\infty(X)=C(K)$ seems, naively, to be of little use,
as the coproduct $\Delta$ is absolutely not (in general) associated with any
product on $K$ (indeed, \cite[Section~8]{DLS} shows that the (first) Arens product
never induces a product on $K$, unless $G$ is discrete).
Hence, one might expect not to get far working with function spaces; nevertheless,
this is exactly the approach which has worked above.

We now explore the weakly almost periodic version of Proposition~\ref{ab_ap_char}.
Let $S$ be a compact, semitopological semigroup, and let $\Delta_S:C(S)\rightarrow
SC(S\times S)$ be the canonical coproduct, given by $\Delta_S(f)(s,t)=f(st)$ for
$f\in C(S)$ and $s,t\in S$.  Now let $\theta:C(S)\rightarrow L^\infty(X)=C(K)$ be
a unital $*$-homomorphism, so we have an induced continuous map $\theta_*:K\rightarrow
S$.  Then $\theta\otimes\theta:SC(S\times S)\rightarrow SC(K\times K)$ is defined to
be the map $(\theta\otimes\theta)f(k,l) = f(\theta_*(k),\theta_*(l))$ for
$f\in SC(S\times S)$ and $k,l\in K$.  We can now say that $\theta$ \emph{intertwines}
the coproducts if $\Delta\theta = \theta_{sc}(\theta\otimes\theta)\Delta_S$.
Again, this is equivalent to $\theta^*:L^1(X)\rightarrow M(S)$ being a Banach algebra
homomorphism, and we write $\theta\in\operatorname{Mor}(S,L^\infty(X))$ in this case.

\begin{proposition}
Let $S$ be a compact semitopological semigroup, and let $\theta\in\operatorname{Mor}(S,L^\infty(X))$.
Then the image of $\theta$ is contained in $\wap(L^1(X))$.  Furthermore,
$\wap(L^1(X))$ is the union of the images of all such $\theta$.  In particular,
$K_\wap$ is the largest quotient of $K$ which is a semitopological semigroup with
the product induced by $\Delta$.
\end{proposition}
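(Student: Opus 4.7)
The plan is to derive both inclusions from Proposition~\ref{what_is_wap} (which characterises $\wap(L^1(X))$ as those $F$ with $\Delta(F)\in\theta_{sc}(SC(K\times K))$), and then read off the ``largest quotient'' statement by duality.

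For the forward containment, suppose $\theta\in\operatorname{Mor}(S,L^\infty(X))$ and $f\in C(S)$. The intertwining hypothesis gives $\Delta\theta(f) = \theta_{sc}\big((\theta\otimes\theta)\Delta_S(f)\big)$. Since $S$ is semitopological, $\Delta_S(f)(s,t)=f(st)$ lies in $SC(S\times S)$; and since $\theta\otimes\theta$ was defined as pullback along the continuous map $\theta_*\times\theta_*:K\times K\to S\times S$, the function $(\theta\otimes\theta)\Delta_S(f)$ is separately continuous on $K\times K$. Proposition~\ref{what_is_wap} then gives $\theta(f)\in\wap(L^1(X))$.

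For the reverse containment, I would exhibit a single $\theta$ whose image is already the whole of $\wap(L^1(X))$: namely, the inclusion $\iota:C(K_\wap)=\wap(L^1(X))\hookrightarrow L^\infty(X)=C(K)$. Its underlying continuous surjection is precisely the map $j:K\to K_\wap$ constructed in the proof of Theorem~\ref{wap_semitop}. To check that $\iota$ intertwines the coproducts, take $F\in C(K_\wap)$; by the construction of $\Delta_\wap$, we have $\Delta(\iota(F))=\theta_{sc}(f)$ where $f(k,l)=\Delta_\wap(F)(j(k),j(l)) = \big((\iota\otimes\iota)\Delta_\wap(F)\big)(k,l)$. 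This is exactly the identity $\Delta\iota=\theta_{sc}(\iota\otimes\iota)\Delta_\wap$, so $\iota\in\operatorname{Mor}(K_\wap,L^\infty(X))$, and its image is all of $\wap(L^1(X))$.

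The ``largest quotient'' assertion is then a formal consequence. A (topological) quotient $K\twoheadrightarrow S$ onto a compact Hausdorff $S$ corresponds by Gelfand duality to a unital closed $*$-subalgebra $C(S)\subseteq C(K)=L^\infty(X)$. Equipping $S$ with a semitopological semigroup structure ``induced by $\Delta$'' means precisely that this inclusion lies in $\operatorname{Mor}(S,L^\infty(X))$; by the first two parts, $C(S)\subseteq\wap(L^1(X))=C(K_\wap)$, which dualises to a factorisation $K\twoheadrightarrow K_\wap\twoheadrightarrow S$. The only step that requires any real care is the verification of the intertwining condition for $\iota$, but this essentially just repackages the construction of $\Delta_\wap$ from Theorem~\ref{wap_semitop} and so presents no genuinely new difficulty.
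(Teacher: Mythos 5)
Your proof is correct and follows essentially the same route as the paper: the forward inclusion via Proposition~\ref{what_is_wap} together with the definition of $\theta\otimes\theta$, and the reverse inclusion by taking $S=K_\wap$ with $\theta$ the inclusion map. You simply spell out the intertwining verification for $\iota$ and the Gelfand-duality reading of the ``largest quotient'' claim, both of which the paper leaves implicit.
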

\begin{proof}
By Proposition~\ref{what_is_wap}, and the definition of $\theta\otimes\theta$, it
is immediate that $\theta$ maps into $\wap(L^1(X))$.  Taking $S=K_\wap$ and
$\theta$ to be the inclusion, we see that $\wap(L^1(X))$ arises as the image of
$\theta$.
\end{proof}

Let $\mathbb G_1=(L^\infty(X_1),\Delta_1)$ and $\mathbb G_2=(L^\infty(X_2),\Delta_2)$
be commutative Hopf von Neumann algebras, and let $T:\mathbb G_1\rightarrow\mathbb G_2$
be a morphism.  For $i=1,2$ let $\wap(L^1(X_i)) = C(K^{(i)}_\wap)$, so that
$K^{(i)}_\wap$ is a compact semitopological semigroup.  
By Lemma~\ref{homo_to_wap}, $T$ maps $\wap(L^1(X_2))=C(K^{(2)}_\wap)$ to
$\wap(L^1(X_1))=C(K^{(1)}_\wap)$ and is a $*$-homomorphism, and so induces a continuous
map $T_\wap:K^{(1)}_\wap\rightarrow K^{(2)}_\wap$.

\begin{proposition}\label{wap_functor}
The assignment of $K_\wap$ to $(L^\infty(X),\Delta)$, and of $T_\wap$ to $T$,
defines a functor between the category commutative Hopf von Neumann algebras and the
category of compact semitopological semigroups with continuous homomorphisms.
\end{proposition}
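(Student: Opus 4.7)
The plan is to verify the three functor axioms. Continuity of $T_\wap$ was already established just before the statement using Lemma~\ref{homo_to_wap} and Gelfand duality. Preservation of identities is immediate, and preservation of composition $(S\circ T)_\wap = S_\wap \circ T_\wap$ follows from the contravariance of the Gelfand spectrum exactly as in Proposition~\ref{ap_functor}; this is the routine book-keeping part.

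The substantive point is that $T_\wap:K^{(1)}_\wap \to K^{(2)}_\wap$ is a semigroup homomorphism. By Theorem~\ref{wap_semitop}, the product on each $K^{(i)}_\wap$ is encoded by the WAP-level coproduct $\Delta_{i,\wap}:C(K^{(i)}_\wap) \to SC(K^{(i)}_\wap \times K^{(i)}_\wap)$ via $F(kl) = \Delta_{i,\wap}(F)(k,l)$. Thus, for $F \in \wap(L^1(X_2))$ and $k,l \in K^{(1)}_\wap$, the required equality $T_\wap(kl) = T_\wap(k)T_\wap(l)$ reduces to the intertwining identity
\[ \Delta_{1,\wap}(T(F))(k,l) = \Delta_{2,\wap}(F)(T_\wap(k), T_\wap(l)). \]
I would derive this from the hypothesis $\Delta_1 T = (T\otimes T)\Delta_2$ at the $L^\infty$ level: by Proposition~\ref{what_is_wap}, both sides admit SC-representatives; the equality then pulls back through the injective map $\theta_{sc,1}$, and descends along the quotient $j_1:K_1 \to K^{(1)}_\wap$ induced by the inclusion $\wap(L^1(X_1)) \hookrightarrow L^\infty(X_1)$.

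The one subtle point, and the principal obstacle, is verifying that $(T\otimes T)\theta_{sc,2}(f)$ is exactly $\theta_{sc,1}$ applied to $(k,l)\mapsto f(\tilde T(k), \tilde T(l))$, where $\tilde T:K_1\to K_2$ is the spectrum map of $T$. This comes down to the pushforward identity $\mu_{T_*(a)} = \tilde T_*(\mu_a)$ for $a\in L^1(X_1)$, which is a short dualisation computation; combined with the integration formula from \cite[Lemma~2.4]{runde}, it gives the pointwise identification on $K_1 \times K_1$. Because the quotient maps $j_i$ are continuous and surjective, the resulting SC identity on $K_1\times K_1$ descends unambiguously to $K^{(1)}_\wap \times K^{(1)}_\wap$, which closes the argument.
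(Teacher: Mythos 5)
Your argument is correct, but it takes a genuinely different route from the paper's. The paper's proof of the homomorphism property is a two-line computation at the level of dual Banach algebras: the semigroup product on $K^{(i)}_\wap$ is, by construction in Theorem~\ref{wap_semitop}, the restriction of the (Arens) product on $\wap(L^1(X_i))^*$ to point masses, and since $T^* = T_*^{**}$ is an algebra homomorphism for the Arens products (hence for the quotient products on the $\wap(\cdot)^*$ spaces), one gets directly $\delta_{T_\wap(st)} = T^*(\delta_s\aone\delta_t) = T^*(\delta_s)\aone T^*(\delta_t) = \delta_{T_\wap(s)T_\wap(t)}$. You instead re-express the product through the coproduct $\Delta_\wap$ and chase the intertwining relation $(T\otimes T)\Delta_2 = \Delta_1 T$ through the $SC$-function representatives, which requires the identification $(T\otimes T)\theta_{sc,2}(f) = \theta_{sc,1}(f\circ(\tilde T\times\tilde T))$ via the pushforward $\mu_{T_*(a)} = \tilde T_*(\mu_a)$ and Runde's iterated-integral lemma, and then the compatibility $T_\wap\circ j_1 = j_2\circ\tilde T$ to descend to $K^{(1)}_\wap\times K^{(1)}_\wap$. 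All of these steps do go through (the change of variables is legitimate because $f(k,\cdot)$ and $(\iota\otimes\mu)f$ are continuous, and $f\circ(\tilde T\times\tilde T)$ is separately continuous), so your proof is valid; it is considerably heavier than the paper's, but it buys an explicit, coproduct-level intertwining statement $\Delta_{1,\wap}\circ T = (T\otimes T)\circ\Delta_{2,\wap}$ that the paper never records and that makes the functoriality of the $SC$-picture transparent. The treatment of identities and composition matches the paper's (both dismiss it as routine book-keeping).
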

\begin{proof}
We first show that $T_\wap$ is indeed a homomorphism.  With reference to the proof of
Theorem~\ref{wap_semitop}, for $s,t\in K^{(1)}_\wap$, we have that $\delta_{st} =
\delta_s \aone \delta_t$.  As $T^* = T_*^{**}$, it is easy to see that $T^*:
\wap(L^1(X_1))^* \rightarrow \wap(L^1(X_2))^*$ is a homomorphism, so that
$\delta_{T_\wap(st)} = T^*(\delta_{st}) = T^*(\delta_s \aone \delta_t)
= T^*(\delta_s) \aone T^*(\delta_t) = \delta_{T_\wap(s)T_\wap(t)}$,
which shows that $T_\wap$ is a homomorphism.

It is now an easy, though tedious, check that we have defined a functor.
\end{proof}

\section{For the measure algebra}\label{measure_wap_props}

Let $G$ be a locally compact group, and consider $M(G) = L^1(X)$ as the predual
of a commutative Hopf von Neumann algebra.  By applying the results of the previous
sections, we see that $\wap(M(G)) = C(K_\wap)$ for some compact Hausdorff space $K_\wap$,
and that $K_\wap$ becomes a semitopological semigroup in a canonical fashion.
The following now follows in exactly the same way as Proposition~\ref{ap_g_functor}
(where, again, given a continuous group homomorphism $\theta$, we define the
associated morphism $T$ and thus get $T_\wap$ as above).

\begin{proposition}\label{wap_g_functor}
The assignment of $K_\wap$ to $G$, and of $T_\wap$ to $\theta$, is a functor between
the category of locally compact spaces with continuous homomorphisms and compact
semitopological semigroups with continuous homomorphisms.
\end{proposition}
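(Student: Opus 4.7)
The plan is to follow exactly the template of Proposition~\ref{ap_g_functor}, substituting the functor of Proposition~\ref{wap_functor} in place of that of Proposition~\ref{ap_functor}. More precisely, I would factor the assignment $\theta \mapsto T_\wap$ through a functor $\theta \mapsto T$ from locally compact groups to commutative Hopf von Neumann algebras, and then apply Proposition~\ref{wap_functor}.

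The first half of this factorisation has already been set up: the construction preceding Proposition~\ref{ap_g_functor} shows that every continuous group homomorphism $\theta: G \to H$ gives rise to a normal unital $*$-homomorphism $T: M(H)^* \to M(G)^*$ determined on $C_0(H)$ by $\ip{T\kappa_{C_0(H)}(f)}{\mu} = \int_G f(\theta(s)) \, d\mu(s)$, whose preadjoint $T_*: M(G) \to M(H)$ is a Banach algebra homomorphism; equivalently, $T$ is a morphism of commutative Hopf von Neumann algebras from $(M(G)^*, \Delta_G)$ to $(M(H)^*, \Delta_H)$.

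To verify that $\theta \mapsto T$ is itself a functor, I would check identities and composition. The identity $\id_G$ induces the identity on $C_0(G)$, which extends by normality and weak$^*$-density of $\kappa_{C_0(G)}(C_0(G))$ in $M(G)^*$ to the identity on $M(G)^*$. For $\theta: G \to H$ and $\psi: H \to L$ continuous, with associated maps $T^\theta, T^\psi, T^{\psi\theta}$, a direct check on $f \in C_0(L)$ gives $T^\theta T^\psi \kappa_{C_0(L)}(f) = T^\theta \kappa_{C_0(H)}(f \circ \psi) = f \circ \psi \circ \theta = T^{\psi\theta}\kappa_{C_0(L)}(f)$, and normality together with weak$^*$-density extends the equality to all of $M(L)^*$. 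Composing this functor with the one from Proposition~\ref{wap_functor} then produces $\theta \mapsto T_\wap$ as a functor into compact semitopological semigroups with continuous homomorphisms.

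The only point requiring real care, rather than being a genuine obstacle, is the extension-by-normality argument, together with some book-keeping about arrow directions: a morphism from $(M(G)^*, \Delta_G)$ to $(M(H)^*, \Delta_H)$ is encoded by $T: M(H)^* \to M(G)^*$ running in the \emph{opposite} direction, and Proposition~\ref{wap_functor} sends it to $T_\wap: K^{(G)}_\wap \to K^{(H)}_\wap$ in the same direction as the original $\theta$, so the overall assignment is indeed covariant.
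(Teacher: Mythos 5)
Your proposal is correct and matches the paper's (essentially implicit) argument: the paper likewise obtains the result by feeding the morphism $T$ constructed from $\theta$ in Section~\ref{ap_sec} into the Hopf--von Neumann functor of Proposition~\ref{wap_functor}, exactly as it did for Proposition~\ref{ap_g_functor}. Your explicit checks of identities, composition via weak$^*$-density, and the direction of the arrows are just the details the paper leaves as an ``easy, though tedious'' verification.
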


As $M(G)$ is a dual Banach algebra with predual $C_0(G)$, we have that
$C_0(G)\subseteq\wap(M(G))$; see \cite[Section~2]{daws} and references therein.
Clearly $1\in \wap(M(G))$.  So the inclusion $\iota:C_0(G) \rightarrow C(K_\wap)$ factors through
\[ \xymatrix{ C_0(G) \ar[r] & C(G_\infty) \ar[r]^{\iota^\infty} & C(K_\wap) }, \]
where $G_\infty$ is the one-point compactification of $G$.  We can turn $G_\infty$
into a semigroup by letting the added point $\infty$ be a \emph{semigroup zero},
so $s\infty = \infty s = \infty$ for $s\in G$.  Then $G_\infty$ is semitopological,
for if $s_\alpha \rightarrow\infty$, then by definition, for each compact set
$K\subseteq G$, there exists $\alpha_0$ with $s_\alpha\not\in K$ for
$\alpha\geq\alpha_0$.  So for $t\in G$, as $s_\alpha t\in K$ if and only if
$s_\alpha \in K t^{-1}$, and $Kt^{-1}$ is compact, it follows that $s_\alpha t
\rightarrow\infty$.  Similarly $ts_\alpha\rightarrow\infty$.

Thus we have an induced continuous map
$\iota^\infty_* : K_\wap\rightarrow G_\infty$, which has dense and closed range,
and hence must be surjective.

\begin{proposition}\label{iota_homo}
The map $\iota^\infty_*:K_\wap\rightarrow G_\infty$ is a homomorphism.
\end{proposition}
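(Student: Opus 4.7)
The plan is to verify, for $s,t \in K_\wap$, that $\iota^\infty_*(st) = \iota^\infty_*(s)\,\iota^\infty_*(t)$ in $G_\infty$ (with $\infty$ a semigroup zero). Since $C_0(G) \cup \{1\}$ separates the points of $G_\infty$, and $\iota^\infty_*(r)$ is determined by $f \mapsto \langle \delta_r, f\rangle$ for $f \in C_0(G) \subseteq C(K_\wap)$, it suffices to check $\langle \delta_{st}, f\rangle = f\bigl(\iota^\infty_*(s)\,\iota^\infty_*(t)\bigr)$ for every $f \in C_0(G)$, under the convention $f(\infty) = 0$. By Theorem~\ref{wap_semitop} we have $\delta_{st} = \delta_s \aone \delta_t$ in $M(K_\wap) = \wap(M(G))^*$, so the task reduces to computing this Arens product against $f$.

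To unfold the Arens product, I would lift $s = j(k)$, $t = j(l)$ through the surjection $j:K \to K_\wap$ from the proof of Theorem~\ref{wap_semitop}. Then $\delta_k, \delta_l \in M(K) = M(G)^{**}$ extend $\delta_s, \delta_t$, and the quotient $M(G)^{**} \to \wap(M(G))^*$ intertwines the Arens products (this is \cite[Lemma~1.4]{ll}), so
\[
\langle \delta_s \aone \delta_t, f\rangle = \langle \delta_k \aone \delta_l, f\rangle = \langle \delta_k, \delta_l \cdot f\rangle.
\]
The pivotal observation is that for $f \in C_0(G)$ and any $a \in M(G)$, the module action stays inside $C_0(G)$: explicitly $(f \cdot a)(y) = \int_G f(xy)\, da(x)$, which is continuous and vanishes at infinity. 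Consequently $\delta_l \cdot f \in C_0(G)$, and one can identify it concretely: if $v := \iota^\infty_*(t) \in G$, then $\delta_l \cdot f$ is the right translate $R_v f$ (since $\delta_l$ restricted to $C_0(G) \subseteq C(K)$ is evaluation at $\pi(l) = \iota^\infty_*(j(l)) = v$), while if $v = \infty$, then $\delta_l \cdot f = 0$.

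Evaluating once more against $\delta_k$ and performing a short case analysis on whether $u = \iota^\infty_*(s)$ and $v = \iota^\infty_*(t)$ lie in $G$ or equal $\infty$ gives $\langle \delta_k \aone \delta_l, f\rangle = f(uv)$ in all four cases, matching the product on $G_\infty$ (both sides vanish as soon as either $u$ or $v$ is $\infty$). This yields the required identity $f(\iota^\infty_*(st)) = f(uv)$ and finishes the proof. The only delicate point is ensuring that the Arens product on $\wap(M(G))^*$ really is the restriction of the one on $M(G)^{**}$ along the quotient, but this is exactly the dual Banach algebra machinery invoked via the cited lemma; the remainder is a direct unwinding of the module actions on $C_0(G)$.
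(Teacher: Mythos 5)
Your proof is correct, and it takes a somewhat different route from the paper's. The paper proves the statement by exhibiting, for each $f\in C(G_\infty)$, the separately continuous function $\alpha(k,l)=f(\phi(k)\phi(l))$ on $K\times K$ and verifying by a double integration against general measures $a,b\in M(G)$ that $\theta_{sc}(\alpha)=\Delta(\kappa(f))$; the homomorphism property of $\iota^\infty_*$ is then read off from the explicit description of $\Delta_\wap$ in Theorem~\ref{wap_semitop}. You instead bypass $\theta_{sc}$ and $SC(K\times K)$ entirely: you reduce to the identity $\ip{\delta_s\aone\delta_t}{f}=f(uv)$ for $f\in C_0(G)$, lift to point masses on $K$, and unwind the Arens product using the fact that $C_0(G)$ is a sub-bimodule of $M(G)^*$ (equivalently, that $M(G)$ is a dual Banach algebra, which the paper records just before the proposition) together with the fact that the restriction map $M(G)^{**}\rightarrow\wap(M(G))^*$ is multiplicative. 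The underlying computation is the same translation trick --- your identification $\delta_l\cdot f=R_vf$ is the point-mass version of the paper's step $\int_K\ip{\kappa(f_l)}{a}\,d\mu_b(l)$ with $f_l(s)=f(s\phi(l))$ --- but your framing is more elementary and makes the degenerate cases at $\infty$ explicit, whereas the paper's version additionally exhibits $\Delta(\kappa(f))$ as an element of $\theta_{sc}(SC(K\times K))$, i.e.\ shows directly that $\iota^\infty$ intertwines the coproducts in the sense used for $\operatorname{Mor}(G_\infty,L^\infty(X))$. Two minor points worth making explicit if you write this up: the constant function $1$ is not actually needed, since $C_0(G)$ already separates the points of $G_\infty$; and the multiplicativity of the restriction map is exactly how the product on $\wap(\mc A)^*$ is defined in \cite[Lemma~1.4]{ll}, so that step is definitional rather than something requiring further argument.
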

\begin{proof}
Let $\kappa:C(G_\infty)\rightarrow M(G)^*$ be the canonical map, and let $K$
be the compact space such that $M(G)^*=C(K)$.  Hence $\kappa:C(G_\infty)
\rightarrow C(K)$ is an injective $*$-homomorphism, and so there exists a
continuous surjection $\phi:K\rightarrow G_\infty$.  Notice then that we have
the following commutative diagrams
\[ \xymatrix{ C(G_\infty) \ar[r]^{\iota^\infty} \ar[rd]_{\kappa} & C(K_\wap)
\ar[d] \\ & C(K) } \qquad
\xymatrix{ G_\infty & K_\wap \ar[l]_{\iota^\infty_*} \\
& K \ar[u]_{j} \ar[ul]^{\phi} } \]
Here $j:K\rightarrow K_\wap$ is as in (the proof of) Theorem~\ref{wap_semitop} above.

Let $f\in C(G_\infty)$,
and define $\alpha:K\times K \rightarrow \mathbb C$ by
\[ \alpha(k,l) = f(\phi(k) \phi(l)) \qquad (k,l\in K). \]
Thus $\alpha \in SC(K\times K)$ as $G_\infty$ is semitopological.
For $l\in K$, let $f_l\in C(G_\infty)$ be defined by $f_l(s) = f(s\phi(l))$
for $s\in G_\infty$.  For $a,b\in M(G)$, we see that
\begin{align*} \ip{\theta_{sc}(\alpha)}{a\otimes b}
&= \ip{\mu_a\otimes\mu_b}{\alpha}
= \int_K \int_K f(\phi(k)\phi(l)) \ d\mu_a(k) \ d\mu_b(l)
= \int_K \ip{\kappa(f_l}{a} \ d\mu_b(l) \\
&= \int_G \int_K f_l(s) \ d\mu_b(l) \ da(s)
= \int_G \int_K f(s\phi(l)) \ d\mu_b(l) \ da(s) \\
&= \int_G \int_G f(st) \ db(t) \ da(s),
\end{align*}
where the final equality comes from repeating the argument.  Thus
\begin{align*} \ip{\theta_{sc}(\alpha)}{a\otimes b} = \ip{\kappa(f)}{ab}
= \ip{\Delta(\kappa(f))}{a\otimes b}. \end{align*}
We conclude that $\theta_{sc}(\alpha) = \Delta(\kappa(f))$.

Now observe that $\Delta_\wap(\iota_\infty(f))$ is the
map $K_\wap\times K_\wap\rightarrow\mathbb C$ given by
\[ (j(k),j(l)) \mapsto \theta_{sc}^{-1}\Delta(\kappa(f)) (k,l)
= \alpha(k,l) = f(\phi(k)\phi(l)) \qquad (k,l\in K). \]
Let $s,t\in K_\wap$, and pick $k,l\in K$ with $j(k)=s, j(l)=t$.  Then, as
$\iota^\infty_* j = \phi$, we have that $f(\iota^\infty_*(s) \iota^\infty_*(t))
= f(\phi(k) \phi(l)) = \Delta_\wap(\iota_\infty(f))(s,t) = \iota_\infty(f)(st)
= f(\iota^\infty_*(st))$.  As $f\in C(G_\infty)$ was arbitrary, we conclude
that $\iota^\infty_*(s) \iota^\infty_*(t) = \iota^\infty_*(st)$.  Thus
$\iota^\infty_*$ is a homomorphism, as required.
\end{proof}

Let $K_0 = (\iota^\infty_*)^{-1}(\{\infty\})$ a closed subset of $K_\wap$.
As $\iota^\infty_*$ is a homomorphism, it follows that $K_0$ is an ideal in $K_\wap$
and that $K_\wap\setminus K_0$ is a locally compact sub-semigroup of $K_\wap$.

Obviously each $s\in G$ induces a normal character $\delta_s$ on $M(G)^*$, and
hence by restriction, a character on $\wap(M(G))$.  So we have a (possibly
discontinuous) map $\theta:G\rightarrow K_\wap$, which we shall henceforth consider
as a map $\theta:G_d\rightarrow K_\wap$.  Let $s,t\in G$ and $F\in\wap(M(G))$, so that
\[ F(\theta(s)\theta(t)) = \ip{\delta_s\aone\delta_t}{F}
= \ip{F}{\delta_s \delta_t} = \ip{F}{\delta_{st}} = F(\theta(st)), \]
so we see that $\theta$ is a homomorphism.

Denote the unit of $G$ by $e_G$.  As $\delta_{e_G}$ is the unit of $M(G)$,
it follows that $\delta_{e_G}$ is also the unit of $\wap(M(G))^*$, and so
$\theta(e_G)$ is the unit of $K$.

\begin{proposition}
Restrict $\iota^\infty_*$ to a homomorphism $K_\wap\setminus K_0 \rightarrow G$.
Let $K_1$ be the kernel of this homomorphism, so that $K_1$ is a closed
sub-semigroup of $K_\wap \setminus K_0$.  Then $\iota^\infty_* \circ \theta$
is the identity on $G_d$ and $\theta$ maps into $K_\wap\setminus K_0$.  In particular,
$K_\wap\setminus K_0 = G_d \ltimes K_1$.
\end{proposition}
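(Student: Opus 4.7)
The plan is to reduce everything to Lemma~\ref{semigp_lemma} applied to the subset $K_\wap \setminus K_0$ of $K_\wap$, with the group $H = G_d$, the homomorphism $\theta : G_d \to K_\wap$, and the candidate retraction $\psi := \iota^\infty_*|_{K_\wap \setminus K_0}$. Three things must be checked before the lemma is applied: that $\iota^\infty_* \circ \theta = \id_{G_d}$, that $\theta$ lands in $K_\wap \setminus K_0$, and that $K_\wap \setminus K_0$ is closed under the semigroup product.

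For the retraction, recall that $\iota^\infty : C(G_\infty) \to C(K_\wap)$ factors through the canonical embedding $\kappa : C(G_\infty) \to M(G)^*$, followed by the quotient $M(G)^* \to C(K_\wap)$, and that $\theta(s)$ is the character induced by $\delta_s \in M(G)$. Hence for $s \in G$ and $f \in C(G_\infty)$ one computes
\[ f(\iota^\infty_*(\theta(s))) = \ip{\iota^\infty(f)}{\delta_s} = \ip{\kappa(f)}{\delta_s} = f(s), \]
so $\iota^\infty_*(\theta(s)) = s$. This gives both the retraction and, since $s \in G$ rather than $\infty$, the fact that $\theta$ takes values in $K_\wap \setminus K_0$. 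For closure under multiplication, Proposition~\ref{iota_homo} gives $\iota^\infty_*(st) = \iota^\infty_*(s)\iota^\infty_*(t)$ for $s,t \in K_\wap \setminus K_0$; this product lies in $G$ because $G$ is a sub-semigroup of $G_\infty$ and the semigroup zero $\infty$ can only be produced when one of the factors already equals $\infty$. Hence $st \in K_\wap \setminus K_0$.

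It remains to invoke Lemma~\ref{semigp_lemma}. The unit $\theta(e_G)$ is the unit of all of $K_\wap$ (because $\delta_{e_G}$ is the identity of $M(G)$), and hence is a unit for the sub-semigroup $K_\wap \setminus K_0$. The kernel $\psi^{-1}(\{e_G\})$ coincides with $K_1$, since $(\iota^\infty_*)^{-1}(\{e_G\})$ is automatically contained in $K_\wap \setminus K_0$. The main (small) obstacle is that Lemma~\ref{semigp_lemma}'s topological conclusion is stated for topological semigroups with \emph{jointly} continuous product, whereas here we only have separate continuity on $K_\wap$; but because $G_d$ carries the discrete topology, continuity of the conjugation action $s \cdot k = \theta(s) k \theta(s^{-1})$ and of the bijection $(s,k) \mapsto k \theta(s)$ is required only in the $K_1$-variable, and is supplied by separate continuity of the product on $K_\wap$. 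The lemma then yields $K_\wap \setminus K_0 = G_d \ltimes K_1$, as required.
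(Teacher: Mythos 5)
Your proposal is correct and follows essentially the same route as the paper: verify that $\ip{\delta_{\theta(t)}}{\iota(f)}=f(t)$ gives both $\iota^\infty_*\circ\theta=\id_{G_d}$ and $\theta(G)\subseteq K_\wap\setminus K_0$, then invoke Lemma~\ref{semigp_lemma}. Your extra observations (that $K_\wap\setminus K_0$ is a sub-semigroup, which the paper records just before the proposition, and that discreteness of $G_d$ makes separate continuity suffice for the lemma) are sound refinements but do not change the argument.
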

\begin{proof}
For $t\in G$ and $f\in C_0(G)$, clearly $\ip{\delta_{\theta(t)}}{\iota(f)}
= f(t)$, showing that $\theta$ takes values in $K\setminus K_0$, and that
$\iota^\infty_*(\theta(t)) = t$, as required.  We now appeal to Lemma~\ref{semigp_lemma}.
\end{proof}

By the universal property for $\wap$, as $K_\wap$ is compact and semitopological,
we have a factorisation
\[ \xymatrix{ G_d \ar[r]^{\theta} \ar[d] & K_\wap \\
(G_d)^\wap \ar[ru]_{\theta^\wap} } \]
Recall that $\theta^\wap$ must satisfy the following property: for
$s\in (G_d)^\wap$, if $(s_\alpha)\subseteq G_d$ is a net with $s_\alpha
\rightarrow s$ in $(G_d)^\wap$, then $\theta(s_\alpha) \rightarrow \theta^\wap(s)$
in $K_\wap$.

We regard $\wap(G_d) = C((G_d)^\wap)$ as a subalgebra of $l^\infty(G) = C(G_d)$.
As before Lemma~\ref{discrete_lemma} we consider the projection
$P:M(G)\rightarrow\ell^1(G)$, which is an algebra homomorphism.
The following has an almost identical proof to that of Lemma~\ref{discrete_lemma}.

\begin{lemma}
$P^*:\ell^\infty(G)\rightarrow M(G)^*$ is an algebra homomorphism which maps
$\wap(G_d)$ into $\wap(M(G))$.
\end{lemma}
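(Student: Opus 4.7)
The plan is to follow the template of Lemma~\ref{discrete_lemma} essentially verbatim, since the algebra-homomorphism half of that earlier statement did not use any feature specific to the almost periodic case, and the submodule-mapping half was deduced purely formally from the fact that $P:M(G)\to\ell^1(G)$ is an algebra homomorphism via Lemma~\ref{homo_to_wap}. The only new input required is the weakly almost periodic branch of Lemma~\ref{homo_to_wap}, which is already available.

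First I would reproduce the first part of the proof of Lemma~\ref{discrete_lemma} to show that $P^*:\ell^\infty(G)\to M(G)^*$ is an algebra homomorphism. Namely, for $\mu\in M(G)$ with atomic part $P(\mu)=a=\sum_s a_s\delta_s$ and for $f\in C_0(G)$, the module action satisfies $\mu\cdot f = f\mu$ (pointwise), from which one immediately reads off $P(\mu\cdot f) = P(\mu)\cdot f$ as elements of $\ell^1(G)$. Dualising, for $\Phi=(\Phi_s)\in\ell^\infty(G)$ one computes $P^*(\Phi)\cdot\mu = \sum_s \Phi_s a_s\delta_s$, and then a second application gives $\langle P^*(\Psi)P^*(\Phi),\mu\rangle = \langle\Psi\Phi,P(\mu)\rangle = \langle P^*(\Psi\Phi),\mu\rangle$ for all $\Psi\in\ell^\infty(G)$ and $\mu\in M(G)$. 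This is literally the same calculation already carried out for Lemma~\ref{discrete_lemma}.

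Second, I would invoke Lemma~\ref{homo_to_wap}: since $P:M(G)\to\ell^1(G)$ is a Banach algebra homomorphism, its adjoint $P^*$ maps $\wap(\ell^1(G))$ into $\wap(M(G))$. Finally, using the standard identification $\wap(\ell^1(G))=\wap(G_d)$, we conclude that $P^*(\wap(G_d))\subseteq \wap(M(G))$, as required.

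There is no real obstacle here: the algebraic calculation is identical to the almost periodic case, and the WAP-preservation is exactly the second conclusion of Lemma~\ref{homo_to_wap}. The only point worth flagging is that one should state explicitly that the previous argument transfers without modification, so the proof can be written as a one-line appeal together with a reference back to the proof of Lemma~\ref{discrete_lemma}.
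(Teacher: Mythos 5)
Your proposal is correct and is exactly the paper's intended argument: the paper states that this lemma ``has an almost identical proof to that of Lemma~\ref{discrete_lemma}'', i.e.\ one repeats the algebra-homomorphism calculation verbatim and then invokes the $\wap$ branch of Lemma~\ref{homo_to_wap} in place of the $\ap$ branch. Nothing further is needed.
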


Again, we have that $\Delta \circ P^* = (P^*\otimes P^*)\circ \Phi_d$,
where $\Phi_d:\ell^\infty(G) \rightarrow \ell^\infty(G\times G)$ is the
coproduct for $G_d$.  We hence get a continuous semigroup homomorphism
$\theta_1 : K \rightarrow (G_d)^\wap$.

\begin{lemma}
Consider the continuous semigroup homomorphisms $\theta^\wap:(G_d)^\wap
\rightarrow K_\wap$ and $\theta_1:K_\wap \rightarrow (G_d)^\wap$.  Then
$\theta_1\circ\theta^\wap$ is the identity on $(G_d)^\wap$ and so $\theta^\wap$
is a homeomorphism onto its range.
\end{lemma}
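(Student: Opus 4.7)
The plan is to mimic the argument given for the almost periodic analogue, which proceeds by first verifying the identity on the dense subset $G_d$ and then extending by continuity. Concretely, for $s\in G$ and $F\in\wap(G_d)=C((G_d)^\wap)$, I would compute
\[ F(\theta_1(\theta(s))) = \ip{\delta_{\theta(s)}}{P^*(F)} = \ip{P^*(F)}{\delta_s} = \ip{F}{P(\delta_s)} = \ip{F}{\delta_s} = F(s), \]
using that $\delta_s\in\ell^1(G)$ so $P(\delta_s)=\delta_s$, and the definition of $\theta$ as evaluation of functionals at $\delta_s$. This shows that $\theta_1\circ\theta$, viewed as a map $G_d\to (G_d)^\wap$, is the canonical inclusion.

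Next I would invoke the defining property of $\theta^\wap$: for any $s\in (G_d)^\wap$, taking a net $(s_\alpha)\subseteq G_d$ with $s_\alpha\to s$ in $(G_d)^\wap$, we have $\theta(s_\alpha)\to\theta^\wap(s)$ in $K_\wap$. Since $\theta_1:K_\wap\to (G_d)^\wap$ is continuous, it follows that
\[ \theta_1(\theta^\wap(s)) = \lim_\alpha \theta_1(\theta(s_\alpha)) = \lim_\alpha s_\alpha = s, \]
using the first step to evaluate $\theta_1\circ\theta$ on points of $G_d$. Hence $\theta_1\circ\theta^\wap$ is the identity on $(G_d)^\wap$.

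Finally, the existence of the left inverse $\theta_1$ forces $\theta^\wap$ to be injective. Since $\theta^\wap$ is a continuous map from the compact space $(G_d)^\wap$ to the Hausdorff space $K_\wap$, injectivity upgrades to a homeomorphism onto its range, completing the proof. I do not foresee any real obstacle: the only subtlety is ensuring that the continuity used in the extension argument is available, and this is exactly what the universal property of the WAP compactification provides — note that joint continuity of multiplication is not needed here, only the continuity of $\theta_1$ and the density of $G_d$ in $(G_d)^\wap$.
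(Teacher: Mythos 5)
Your proposal is correct and follows essentially the same route as the paper: the same computation showing $\theta_1\circ\theta$ is the canonical inclusion of $G_d$, followed by the same density-and-continuity extension and the standard compact-to-Hausdorff argument. You merely spell out the intermediate step $\ip{F}{P(\delta_s)}$ and the net argument that the paper compresses into ``by continuity''.
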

\begin{proof}
For $s\in G$ and $F\in\wap(G_d)$, we calculate that
\[ F\big( \theta_1\theta(s) \big) = \ip{\delta_{\theta(s)}}{P^*(F)}
= \ip{P^*(F)}{\delta_s} = \ip{F}{\delta_s} = F(s). \]
Hence $\theta_1 \circ \theta: G \rightarrow (G_d)^\wap$ is the canonical inclusion.
By continuity, it follows that $\theta_1 \circ \theta^\wap$ is the identity on
$(G_d)^\wap$, and so $\theta^\wap$ must be a homeomorphism onto its range.
\end{proof}

\begin{lemma}\label{when_k0_trivial}
The following are equivalent:
\begin{enumerate}
\item\label{wkt:one} $G$ is compact;
\item\label{wkt:two} $K_0$ is empty;
\item\label{wkt:three} $\theta^\wap$ maps into $K_\wap \setminus K_0$.
\end{enumerate}
\end{lemma}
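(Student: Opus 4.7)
The plan is to establish the cycle (1)$\Rightarrow$(2)$\Rightarrow$(3)$\Rightarrow$(1); only the first and last implications need real work.

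For (1)$\Rightarrow$(2), I would compute $\iota(1_G)$ when $G$ is compact. Under the integration embedding $C_0(G)\hookrightarrow M(G)^*$, the constant function $1_G$ becomes the functional $\mu\mapsto\mu(G)$, which is exactly the unit of the C$^*$-algebra $M(G)^*=C_0(G)^{**}$ (it is the weak$^*$-limit of any contractive approximate identity in $C_0(G)$, which in this case can be taken to be $1_G$ itself). Thus $\iota(1_G)=1_{K_\wap}$. Since $\iota^\infty$ is unital by construction, $\iota^\infty(1_{\{\infty\}}) = \iota^\infty(1_{G_\infty}) - \iota^\infty(1_G) = 0$. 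Applying any character $\delta_k$ of $C(K_\wap)$ gives $1_{\{\infty\}}(\iota^\infty_*(k)) = \langle\delta_k,\iota^\infty(1_{\{\infty\}})\rangle = 0$, so $\iota^\infty_*(k)\neq\infty$ for every $k\in K_\wap$; that is, $K_0=\emptyset$.

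The implication (2)$\Rightarrow$(3) is immediate, since then $K_\wap=K_\wap\setminus K_0$.

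For (3)$\Rightarrow$(1), I argue the contrapositive. Suppose $G$ is non-compact, and pick a net $(s_\alpha)$ in $G$ that eventually leaves every compact subset of $G$; by definition of the one-point compactification $s_\alpha\to\infty$ in $G_\infty$. Using compactness of $(G_d)^\wap$ we may pass to a subnet converging to some $s\in (G_d)^\wap$. By the continuity of the canonical extension $\theta^\wap$ (recalled just before the lemma), $\theta(s_\alpha)\to\theta^\wap(s)$ in $K_\wap$. Now apply the continuous map $\iota^\infty_*$ together with the identity $\iota^\infty_*(\theta(t))=t$ for $t\in G$ (proved in the proposition immediately above) to conclude that $s_\alpha=\iota^\infty_*(\theta(s_\alpha))\to\iota^\infty_*(\theta^\wap(s))$ in $G_\infty$. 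Uniqueness of limits in the Hausdorff space $G_\infty$ forces $\iota^\infty_*(\theta^\wap(s))=\infty$, i.e.\ $\theta^\wap(s)\in K_0$, contradicting (3).

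The main obstacle is the first step: carefully tracking the identifications to see that the constant function $1_G$, viewed in $M(G)^*$ via integration, really is the unit of the bidual C$^*$-algebra, so that $\iota^\infty$ kills the isolated indicator $1_{\{\infty\}}$. Once that is in hand, (3)$\Rightarrow$(1) is a routine net-convergence argument combining compactness of $(G_d)^\wap$ with the identity $\iota^\infty_*\circ\theta=\id_G$.
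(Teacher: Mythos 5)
Your proof is correct and follows essentially the same route as the paper: the paper dismisses (1)$\Rightarrow$(2) as immediate (your computation with $\iota^\infty(1_{\{\infty\}})=0$ just fleshes out why $\iota^\infty_*$ cannot hit $\infty$ when $G$ is compact), and your contrapositive net argument for (3)$\Rightarrow$(1) is word-for-word the paper's.
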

\begin{proof}
As $K_0$ is the inverse image of $\{\infty\}$ under $\iota^\infty_*$, it
is immediate that if $G$ is compact, then $K_0$ is empty.  So (\ref{wkt:one})
implies (\ref{wkt:two}), and clearly (\ref{wkt:two}) implies (\ref{wkt:three}).

Suppose that $G$ is not compact.  Then we can find some net $(s_\alpha)\subseteq G$
which eventually leaves every compact subset of $G$.  But moving to a subnet if necessary,
we may suppose that $(s_\alpha)$ converges in $(G_d)^\wap$, to $s$ say.
Notice that in $K_\wap$, we have $\theta^\wap(s) = \lim_\alpha \theta^\wap(s_\alpha)
= \lim_\alpha \theta(s_\alpha)$.  As $\iota^*_\infty:K_\wap\rightarrow G_\infty$ is
continuous, it follows that
\[ \iota^*_\infty \theta^\wap(s) = \lim_\alpha \iota^*_\infty \theta(s_\alpha)
= \lim_\alpha s_\alpha = \infty. \]
Hence $\theta^\wap(s) \in K_0$, and so we have shown that (\ref{wkt:three})
implies (\ref{wkt:one}).
\end{proof}

As $(G_d)^\wap$ is not a group, we cannot apply Lemma~\ref{semigp_lemma}.
However, in \cite{chou}, it is shown that unless $G$ is finite,
$\wap(G_d) / c_0(G_d)$ contains a copy of $\ell^\infty$.  In particular,
$\wap(G_d)$ is ``large'', and so also $K_\wap$ is ``large'' in this sense.
The following shows, again informally, that $K_\wap\setminus K_0$ is also ``large''.

\begin{proposition}
Let $G$ be non-discrete.  For any compact, non-discrete subset $A\subseteq G$,
the image of $A$ in $(G_d)^\wap$ is not closed.  However, the image of
the closure of $A$, under $\theta^\wap$, is contained in $K_\wap\setminus K_0$.
\end{proposition}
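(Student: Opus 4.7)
The plan is to split the proposition into its two clauses and tackle the easier (second) one first.

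For the second clause, let $s\in\overline{A'}$, where $A'\subseteq(G_d)^\wap$ is the image of $A$ under the canonical injection $\iota:G_d\to(G_d)^\wap$; pick a net $(a_\alpha)\subseteq A$ with $\iota(a_\alpha)\to s$. The composition $\iota^\infty_*\circ\theta^\wap:(G_d)^\wap\to G_\infty$ is continuous, and on the image of $\iota$ it agrees with the inclusion $G\hookrightarrow G_\infty$, because $\theta^\wap\circ\iota=\theta$ and $\iota^\infty_*\circ\theta$ is the inclusion (established just before the proposition). Hence $a_\alpha\to\iota^\infty_*(\theta^\wap(s))$ in $G_\infty$. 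Since $(a_\alpha)$ stays inside the compact subset $A\subseteq G$, the limit must lie in $A$; in particular $\iota^\infty_*(\theta^\wap(s))\neq\infty$ and $\theta^\wap(s)\in K_\wap\setminus K_0$.

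For the first clause we argue by contradiction: suppose $A'$ is closed in $(G_d)^\wap$, hence compact. The previous analysis shows that $\iota^\infty_*\circ\theta^\wap|_{A'}$ is a continuous map $A'\to A$ which inverts the injection $\iota|_A:A\to A'$ (injectivity of $\iota$ follows because WAP functions separate the points of $G_d$, as is implicit in the identity ``$\theta_1\circ\theta$ is the canonical inclusion'' shown above). A continuous bijection from a compact space to a Hausdorff one is a homeomorphism, so $\iota|_A$ would have to be continuous for the $G$-topology on $A$; equivalently, every $F\in\wap(G_d)$ would restrict to a $G$-topology continuous function on $A$. To contradict this, we aim to exhibit a single $F\in\wap(G_d)$ whose restriction to $A$ is discontinuous: fix a non-isolated point $a\in A$ and a sequence $(a_n)\subseteq A\setminus\{a\}$ with $a_n\to a$ in $G$, and appeal to the Chou result cited just before the proposition, by which $\wap(G_d)/c_0(G_d)$ contains an isometric copy of $\ell^\infty$. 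From this abundance of WAP functions, after first thinning $(a_n)$ so that $\{a_n\}$ is algebraically sparse in $G_d$, one extracts $F\in\wap(G_d)$ with $F(a)=1$ and $F(a_n)=0$ for all sufficiently large $n$, giving $F(a_n)\not\to F(a)$ and so the desired contradiction.

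The genuine technical step is this last construction; everything else reduces to continuity, compactness, and the concrete description of $K_0$ as a fibre of $\iota^\infty_*$. I expect the construction to be a bookkeeping exercise rather than a deep obstacle, because Chou's embedding of $\ell^\infty$ is ``free'' enough to realise any prescribed two-valued pattern on a sufficiently thin subset of $G_d$; the real content of the proposition is the rigidity phenomenon that the continuity forced by $A'$ being closed is incompatible with any such large supply of WAP functions.
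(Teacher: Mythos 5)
Your second clause, and the reduction in the first clause (if $A'$ is closed then $(\iota^\infty_*\circ\theta^\wap)|_{A'}$ is a continuous bijection from a compact space onto $A$ inverting $\iota|_A$, hence a homeomorphism, so every $F\in\wap(G_d)$ restricts to a $G$-continuous function on $A$), coincide with the paper's argument, which uses the map $\psi:(G_d)^\wap\rightarrow G_\infty$ in place of your composite $\iota^\infty_*\circ\theta^\wap$ and Tietze extension in place of your direct continuity statement; these are cosmetic differences.

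The gap is in the final step of the first clause: you defer the construction of an $F\in\wap(G_d)$ that is discontinuous on $A$ to an unexecuted argument based on Chou's theorem, and that route is both unnecessary and shaky. Chou's result lives in the quotient $\wap(G_d)/c_0(G_d)$, whose elements have no well-defined pointwise values, so ``extracting'' from the embedded copy of $\ell^\infty$ a genuine function with $F(a)=1$ and $F(a_n)=0$ requires a lifting-and-selection argument that you do not supply, and the ``algebraic sparseness'' thinning you invoke is not defined or used anywhere. The point you are missing is that the required function is already sitting in plain view: $c_0(G_d)\subseteq\wap(G_d)$ (functions on the \emph{discrete} group vanishing at infinity are weakly almost periodic), and the indicator function of the singleton $\{a\}$ belongs to $c_0(G_d)$. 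It takes the value $1$ at the non-isolated point $a$ and $0$ at every $a_n$, so its restriction to $A$ is discontinuous, which is the desired contradiction. This is exactly how the paper closes the argument: it observes that continuity on $A$ of every element of $c_0(G_d)$ forces $A$ to be discrete. With that substitution your proof is complete and agrees with the paper's.
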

\begin{proof}
The inclusion $G_d\rightarrow G^\wap$ is continuous, so by the universal property,
we get a continuous map $\phi:(G_d)^\wap \rightarrow G^\wap$ which has dense range.
As $(G_d)^\wap$ is compact, it follows that $\phi$ is surjective.
We can see $\phi$ in a more concrete way.  By \cite[Section~4.2]{BJM},
$\wap(G) = C(G)\cap\wap(G_d)$.  By considering both $\wap(G)$ and $\wap(G_d)$
as subalgebras of $\ell^\infty(G)$, we see that the inclusion map $\wap(G)
\rightarrow\wap(G_d)$ is a $*$-homomorphism, and so induces a continuous map
$\phi:(G_d)^\wap \rightarrow G^\wap$.

As also $C_0(G) \subseteq \wap(G)$, the above argument (compare with
Proposition~\ref{iota_homo}) shows also the existence of a continuous homomorphism
$\psi:(G_d)^\wap \rightarrow G_\infty$ such that $\psi(s) = s$ for each $s\in G$.

Suppose that $A\subseteq G$ is compact and that the image of $A$ in $(G_d)^\wap$,
say denoted by $A_0$, is closed.  We can hence consider the restriction
$\psi|_{A_0} : A_0 \rightarrow G_\infty$.  Then
$\psi|_{A_0}(s) = s$ for each $s\in A$, and so $\psi|_{A_0} : A_0 \rightarrow A$
is a continuous bijection between compact sets, and is hence a homeomorphism.

We then claim that for each $f\in\wap(G_d)$, there exists $F\in C_0(G)$ such
that, if $C_0(G)$ is considered as a subspace of $\ell^\infty(G)$, then
$f(s)=F(s)$ for each $s\in A$.  By the Tietze extension theorem, we can simply 
let $F$ be an extension of the map $f \circ \psi|_{A_0}^{-1} : A\rightarrow
\mathbb C$.  Thus $f$ is continuous on $A$.
However, $c_0(G) \subseteq \wap(G_d)$, so we have shown that the restriction
of any $c_0(G)$ function to $A$ is continuous.  This implies that $A$ must be
discrete, as required.

Finally, let $A\subseteq G$ be compact, let $(s_\alpha)$ be a net in $A$, and
suppose that $s_\alpha\rightarrow s$ in $(G_d)^\wap$.  This means that
$f(s_\alpha)\rightarrow f(s)$ for each $f\in\wap(G_d)$, hence for all $f\in C_0(G)
\subseteq \wap(G) \supseteq \wap(G_d)$.  So $(s_\alpha)$ must converge in $G$,
and hence in $A$, say to $t\not=\infty$.  Then, as in the previous lemma,
$\iota^*_\infty \theta^\wap(s) = t$, so that $\theta^\wap(s) \not\in K_0$, as required.
\end{proof}

Exactly the same proof as used in Proposition~\ref{kzero_non_triv} shows that
$K_\wap \not= (G_d)^\wap$ when $G$ is non-discrete.  We finish by mentioning that,
suitably modified, the results of Section~\ref{antipode} apply to the $\wap$ case,
although this seems to give little insight, given, again, that $(G_d)^\wap$ is not
a group.  Similarly, it seems natural to ask about invariant measure on $K_\wap$,
but we have made no progress in this direction.

\bigskip
\noindent\textbf{Author's address:}
\parbox[t]{5in}{School of Mathematics,\\
University of Leeds,\\
Leeds LS2 9JT\\
United Kingdom}

\smallskip
\noindent\textbf{Email:} \texttt{matt.daws@cantab.net}

\end{document}